\newtheorem{lemma}{Lemma}
\newtheorem{theorem}{Theorem}
\newtheorem{definition}{Definition}
\renewcommand{\vec}[1]{{\mbox{\mathversion{bold}\ensuremath{#1}}}} 
\begin{document}


\author{Dominik Kahl}
\affiliation{Department of Mathematics and Technology, 
	University of Applied Sciences Koblenz, Joseph-Rovan-Allee 2, 53424 Remagen, Germany}


\author{Philipp Wendland}
\affiliation{Department of Mathematics and Technology, 
	University of Applied Sciences Koblenz, Joseph-Rovan-Allee 2, 53424 Remagen, Germany}

\author{Matthias Neidhardt}
\affiliation{Institute for Computer Science, University of Bonn, Endenicher 
	Allee 19a, 53115 Bonn, Germany}
\author{Andreas Weber}

\affiliation{Institute for Computer Science, University of Bonn, Endenicher 
	Allee 19a, 53115 Bonn, Germany}

\author{Maik Kschischo}
\email[]{kschischo@rheinahrcampus.de}
\affiliation{Department of Mathematics and Technology, 
	University of Applied Sciences Koblenz, Joseph-Rovan-Allee 2, 53424 Remagen, Germany}

\date{\today}


\begin{abstract}
	Despite recent progress in our understanding of complex 
	dynamic networks, it remains challenging 	to devise
	sufficiently accurate models to observe, control or predict 
	the state of real systems in biology, economics or other fields. 
	A largely overlooked fact is that these systems are typically open 
	and receive unknown inputs from their environment. A further 
	fundamental obstacle are structural model errors caused by 
	insufficient or inaccurate knowledge about the quantitative interactions 
	in the real system. 	
	
	Here, we show that unknown inputs to open systems and model errors can 
	be treated	under the common framework of invertibility, which is a requirement 
	for reconstructing these disturbances from output measurements. By exploiting 
	the fact that invertibility can be decided from the influence graph of the 
	system, we analyse the relationship between structural network properties 
	and invertibility under different realistic scenarios. We show that sparsely 
	connected  	scale free networks are the most difficult to invert. We introduce a new 
	sensor node placement algorithm to select a minimum set of measurement 
	positions in the network required for invertibility. This algorithm 
	facilitates optimal experimental design for the reconstruction of inputs 
	or model errors from output measurements. Our results have both 
	fundamental and practical implications for nonlinear systems analysis, modelling 
	and design.
\end{abstract}


\title{
	Structural Invertibility and Optimal Sensor Node Placement for Error 	
	and Input Reconstruction in Dynamic Systems
	}
\maketitle


\section{Introduction}

	\begin{figure*}
		\centering
		\includegraphics[width=2\columnwidth]{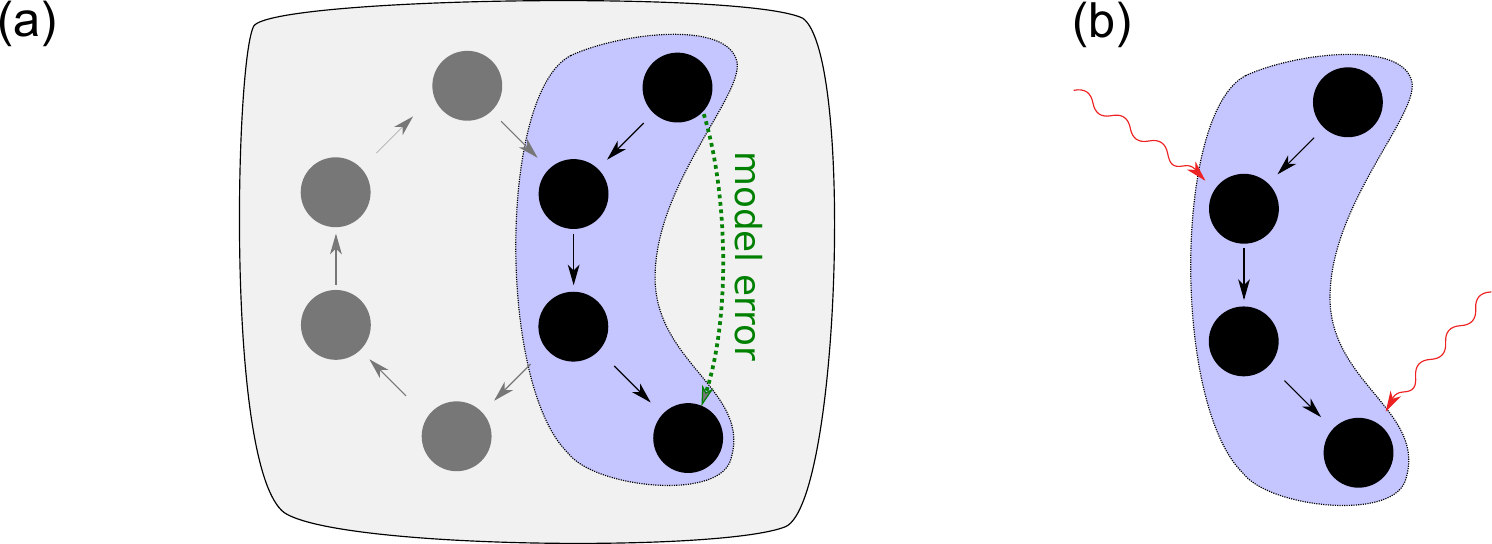}
		\caption{
			Schematic illustration of an open system and its environment. 
			(a) The open system (blue shadowed region) interacts with its 
			dynamic environment. The state (black nodes) of the open system 
			is determined by both internal interactions and by the 
			interactions with the (unknown) state of the environment 	
			(grey nodes). There are also systematic model errors, as indicated by a 
			missing interaction between state variables (green dashed line). 
			(b) Both the effects of the environment on the state of the open 
			system and model errors can be regarded as unknown inputs. 
			}
		\label{fig:Fig1}
	\end{figure*}
	\begin{figure*}
		\centering
		\includegraphics[width=1.8\columnwidth]{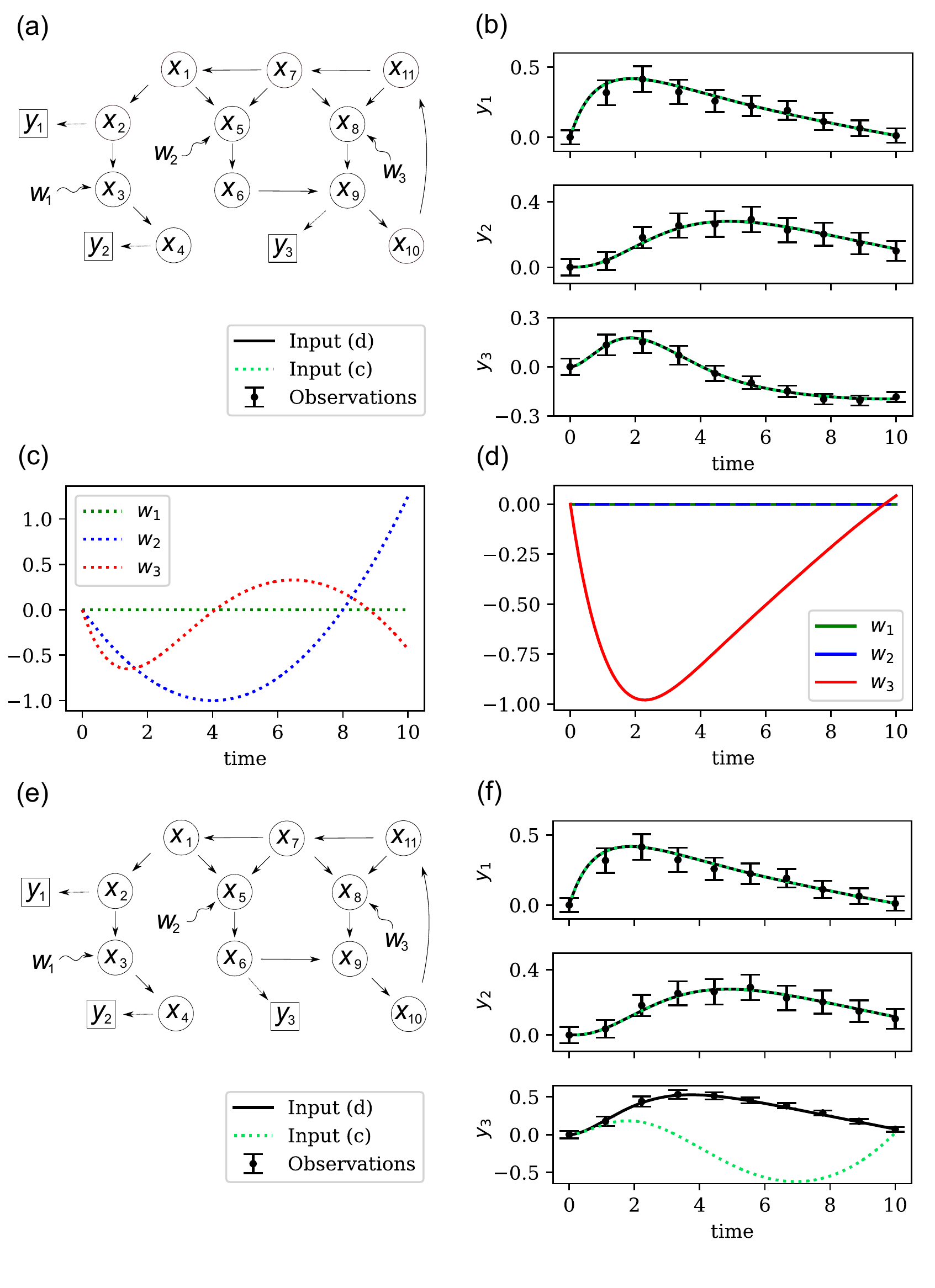}
		\caption{
			Invertibility of a dynamic system. (a)~The
			influence graph represents the states 
			$\vec{x}=(x_1,\ldots, x_9)$ 
			of a system as nodes (circles). Endogenous interactions are 
			indicated by arrows between the states. This system receives 
			three input signals 
			$\vec{w}(t)=(w_1(t),w_2(t),w_3(t))$ 
			(wiggly arrows) targeting the input nodes 
			$S=\{x_3,x_5, x_8\}$. The output 
			$\vec{y}(t)=(y_1(t), y_2(t), y_3(t))$ 
			(squares) is given by measurements of the state variables in the 
			sensor node set 
			$Z=\{x_2, x_4, x_9\}$. 
			(b)~The measured output data (dots) of the open system can be 
			explained by different input signals (c,d) causing identical 
			outputs, as indicated by the dotted line for input (c) and  by 
			the solid line for input (d). Thus, the system is not invertible. 
			(e) For 
			$Z=\{x_2, x_4, x_6\}$, i.e. if the sensor for the output 
			$y_3$ 
			is moved to state 
			$x_6$ 
			instead of 
			$x_9$ 
			(compare (a)), we obtain an invertible system and the input 
			signal in (d) is unique for the observed output in (f).}
		\label{fig:Fig2}
		\end{figure*}

	Dynamic systems in such diverse areas like physics, biology, 
	economics or engineering are often composed of 
	many interacting components. Developing useful and sufficiently  
	accurate models of such complex dynamic networks having many degrees of freedom
	remains challenging
	\cite{	raue_lessons_2013,
		 	almog_is_2016,
		 	janes_engineering_2017,
		 	tsigkinopoulou_respectful_2017} 
	despite the ever increasing size of network data sets providing the 
	wiring diagrams of diverse systems
	\cite{	nr, 
			batagelj_pajek_nodate, 
			kunegis_konect_2013,
			leskovec_snap_2014}.

	One important challenge for modelling real dynamic networks is that they 
	are open systems receiving inputs from their environment, see 
	Fig.~\ref{fig:Fig1}. These inputs need to be either known or under 
	experimental control to fully characterise the dynamic state of the 
	network. For example, a biological cell is a system with a 
	certain autonomy, but at the same time is crucially dependent on signals 
	and nutrients received from the exterior. It is practically impossible to 
	simultaneously detect or control all signals received by a 
	living cell in their natural environment and to measure all compounds 
	exchanged with the extracellular space. As another example, consider a 
	population dynamic system in a certain geographical area. The 
	state, i.e. the population count of the different species in this area, 
	is not only determined by the inner dynamic interactions (e.g.~pray 
	and predator relationships) between the species, but also
	by migration and by environmental factors. Again, for state estimation it 
	is typically neither feasible to directly quantify
	all these inputs nor is it possible to ignore them. The same applies to 
	physical, engineering, or economic systems, which will always be subject 
	to inputs and disturbances from the environment.
	
	If the inputs to the system cannot directly be obtained, then we might 
	want to infer the inputs from the 
	measurable outputs. For the biological cell, we
	can try to estimate the transport fluxes across the membrane and the 
	signals received by the cell from 
	time series of measured protein concentrations. 
	For a population, the number of certain species will be monitored and we 
	will try to estimate dynamic 
	changes of birth and migration rates for other, not directly observed species.  Algorithms to 
	estimate the inputs from the 
	outputs of systems described by ordinary differential equations (ODEs) 
	are an ongoing research topic, see e.g. 
	\cite{  kuhl_real-time_2011,
			schelker_comprehensive_2012,
			Fonod_boblin_unknown_2014, 
			engelhardt_learning_2016, 
			engelhardt_bayesian_2017, 
			chakrabarty_state_2017, 
			tsiantis_optimality_2018}.
	However, no such algorithm can succeed, if the output doesn't provide 
	sufficient information about the input. Mathematically, this means that 
	the map from input to output is not \textit{invertible} and thus, systems 
	inversion is bound to fail. 

	The situation is illustrated in Fig.~2(a), where a 
	hypothetical system is represented by an influence graph. The nodes 
	correspond to the systems states 
	$\vec{x}=(x_1,\ldots x_{11})$
	and the black arrows indicate the endogenous interactions amongst them. 
	The open system receives three input signals 
	$\vec{w}(t)=(w_1(t), w_2(t),w_3(t))$ 
	(wiggly arrows) from its exosystem, targeting the set 
	$S=\{x_3, x_5, x_8\}$
	of three input nodes. The output signal 
	$\vec{y}(t)=(y_1(t),y_2(t),y_3(t))$ 
	in Fig.~2(b) is formed by measuring the time course of the 
	sensor node set 
	$Z=\{x_2, x_4, x_9\}$, i.e.$y_1(t)=x_2(t)$, $y_2(t)=x_4(t)$ 
	and $y_3(t)=x_9(t)$. 
	Nonetheless, these output observations are insufficient to uniquely 
	reconstruct the corresponding input signals. Indeed, the two different 
	input signals~(Fig.~\ref{fig:Fig2}(c,d)) generate identical outputs 
	(solid and dahed lines Fig.~\ref{fig:Fig2}(b)) and both reproduce the data 
	points with good accuracy. Thus, the system is not invertible. 	
	
	The lack of invertibility can be remedied by a careful experimental 
	design. In the example of Fig.~\ref{fig:Fig2}, the system can be made 
	invertible by measuring the state $y_3(t)=x_6(t)$ instead of 
	$x_9(t)$, compare Fig.~\ref{fig:Fig2}(a) and (e). As explained below, 
	the modified output from the sensor node set $Z=\{x_2, x_4, x_6\}$ 
	provides sufficient information to uniquely identify the input signal 
	in~Fig.~\ref{fig:Fig2}(c) as the cause for the observed 
	data~(Fig.~\ref{fig:Fig2}(f)). This example highlights the need for a \textit{sensor 
	node placement algorithm for invertibility} of open systems, which is one 
	important result of this text. 
	
	Systematic model errors are another important source of potential discrepancies between
	measurements and model outputs. One type of model errors is caused by an incorrect influence
	graph, i.e. by missing or spurious interactions between the state variables (Fig.~\ref{fig:Fig1}). Another type
	of model errors originates from misspecifications of the functional form or parameters of the 
	interactions. However, both types of these endogenous model errors can effectively be treated as
	unknown inputs to the model system at hand, see Sec.~\ref{sec:open}. As for genuine exogenous inputs, a unique reconstruction of these unknown 
	inputs caused by endogenous model errors is again only possible if the systems model is invertible. 

	Controllability and observability are other important systems properties, which have attracted
	renewed interest from a complex systems point of view
		\cite{
		liu_controllability_2011, 
		cornelius_controlling_2011, 
		liu_observability_2013, 
		sun_controllability_2013, 
		cornelius_realistic_2013, 
		gao_target_2014,
		yan_spectrum_2015, 
		motter_networkcontrology_2015, 
		liu_control_2016,
		summers_submodularity_2016, 
		aguirre_observ_2018,
		haber_state_2018}.
	Structural approaches use only the influence graph
	\cite{
		lin_structural_1974, 
		liu_controllability_2011, 
		liu_observability_2013, 
		gao_target_2014, 
		motter_networkcontrology_2015, 
		liu_control_2016}
	and are therefore well suited to examine the controllability/observability 
	properties related to topological network features. Structural controllability
	(observability) analysis provide binary decisions whether a system is controllable 
	(observable) or not. Later, it was emphasised that realistic control and state estimation
	often require quantitative information about the network interactions 
    and parameters of the system~
	\cite{ 	krener_meas_2009,
			cornelius_controlling_2011, 
			sun_controllability_2013, 
			cornelius_realistic_2013, 
			yan_spectrum_2015,
			lo_iudice_structural_2015,
			summers_submodularity_2016, 
			klickstein_energy_2017,
			aguirre_observ_2018,
			haber_state_2018}.
	The underlying reason is that the specific functional form of the couplings 
    might require huge energies for control or very sensitive measurements
    for state estimation. Continuous measures quantifying the
	degree of controllability or observability 
	\cite{ 	krener_meas_2009,
			cornelius_controlling_2011, 
			sun_controllability_2013, 
			cornelius_realistic_2013, 
			yan_spectrum_2015,
			lo_iudice_structural_2015,
			summers_submodularity_2016, 
			klickstein_energy_2017,
			aguirre_observ_2018,
			haber_state_2018}
    were derived as alternatives to binary decisions about controllability or 
    observability.

	In contrast, the invertibility of open systems has not sufficiently been 
	investigated in the context of large complex dynamic systems. Work in 
	controllabity and observability of 
	complex networks builds on older results from the control engineering 
	literature 
	\cite{
		lin_structural_1974},
	providing algebraic and graphical conditions for both 
	traits.~The situation is similar for invertibility: Theorems providing algebraic 
	conditions for the invertibility of linear systems were already published 
	in the 1960s 
	\cite{
		brockett_reproducibility_1965, 
		silverman_inversion_1969, 
		sain_invertibility_1969}
	and later extended to nonlinear systems
	\citep{
		hirschorn_invertibility_1979, 
		nijmeijer_invertibility_1982, 
		nijmeijer_right-invertibility_1986, 
		fliess_note_1986}.
	These algebraic conditions require a full specification of the ODE 
	system including all the parameters. Even in the rather exceptional case, 
	where these data are available, the numerical test of these conditions is 
	computationally very demanding for large networks. Fortunately, invertibility 
	is a structural property~
	\cite{
		wey_rank_1998, 
		dion_generic_2003},
	which can for all practical purposes be inferred
	from the influence graph and the input and output node sets $S$ and 
	$Z$ (see Fig.~2).	
	This {\em structural invertibility} condition (see~Sec.~\ref{sec:criteria_inv})
	can efficiently be tested even for very large linear or nonlinear systems with thousands
	of state nodes. 
	
	In this text, we add invertibility as a systems property which is essential for 
	understanding open and incompletely known systems. As our main new contributions we
	\begin{itemize}
	\item show that unknown inputs 
	to open systems and structural model errors can be treated under the
	the same conceptual framework~(Sec.~\ref{sec:open});
	\item establish invertibility as a necessary condition for unknown input observers and 
	input reconstruction algorithms to work~(Subsec.~\ref{ssec:practinv});
	\item provide a new recursive algebraic criterion for invertibility~(Subsec.~\ref{subsec:iter_crit});
	\item discover important structural network properties influencing invertibility~(Sec.~\ref{sec:net});
	\item provide a simple but efficient algorithm for sensor node placement to achieve structural invertibility 
	with a minimum number of measured outputs~(Sec.~\ref{sec:sensor}). 
	\end{itemize}

	First we show, that structural model errors in nonlinear 
	dynamic system can be treated as unknown  inputs~(Sec.~\ref{sec:open}). 
	Thus, the question of whether it is possible to reconstruct model errors and unknown 
	inputs to open systems can be treated under the common framework of invertibility. Second, 
	we provide a novel criterion for the invertibility of linear systems, which can be used 
	for medium sized networks up to a few hundred state nodes due to its recursive nature~(Sec.~	
	\ref{sec:criteria_inv}). 
	For large systems we exploit the structural invertibility criterion, which 
	uses only the graph structure encoded in the interactions of the system.	
	We will also briefly touch upon the topic of practical systems inversion, which 
	requires careful regularisation schemes even for invertible systems.
	Throughout this text, we will 	consider three different scenarios (SC~\Romannum{1}-\Romannum{3}).  
	Scenario SC~\Romannum{1} refers to the case, that the positions of the inputs and outputs 
	of the system are given and that we have no opportunity to deliberately choose 
	these positions (Sec.~\ref{sec:net}). For SC~\Romannum{1} we show that 
	invertibility depends largely on the degree distribution of the influence graph 
	and that many sparse and scale free networks tend to be non-invertible. Since 
	many real networks show this characteristics, we assume in scenario 
	SC~\Romannum{2}, that the inputs are given, but the output 
	positions can be chosen. As an important result we present a sensor node 
	placement algorithm in Sec.~\ref{sec:sensor}, which provides a minimum set 
	of outputs required to uniquely reconstruct the inputs. This algorithm is 
	also useful in scenario SC~\Romannum{3}, where we assume that the positions of 
	both inputs and outputs can be manipulated. We show that placing the inputs at
	hubs with a high out-degree can drastically increase the probability that a 
	certain dynamic network can be made invertible, if in addition the outputs
	are suitably chosen, e.g. by our sensor node placement algorithm. In~Sec.~\ref{sec:conclusion} 
	we discuss the far reaching implications of invertibility for nonlinear systems 
	analysis and some open questions for further research.

\section{Open systems, unknown inputs and model errors}\label{sec:open}

	There are  two basic reasons for discrepancies between observed time 
	series data from a real world open system and the output of a 
	mathematical model for the system: First, the system might receive 
	unknown inputs
	\cite{
		mook_minimum_1987,
		boukhobza_state_2007, 
		moreno_dynamical_2014,
		engelhardt_learning_2016,
		chakrabarty_state_2017, 
		engelhardt_bayesian_2017}
	from the environment, which are not covered by the model, but nevertheless
	influence the state and the resulting measured output. 
	Second, there might be model errors, i.e. misspecified  functional 
	descriptions or missing interactions between internal model states or 
	inaccurate parameter values. In this section, we define open systems with 
	unknown inputs and then show that for ODE models, both types of error 
	can be treated as additive unknown inputs to the model. 

	\subsection{Open systems with unknown inputs}
		Consider a dynamic system 
		$\mathcal{S}_{o}$
		with time dependent state 	
		vector $\vec{x}(t) = (x_1(t),\ldots,x_N(t))^T\in \mathcal{X}$.
		The state space $\mathcal{X}$ is either $\mathbb{R}^N$, 
		a subset of it, or an $N$-manifold. 
		A dynamic model of the open system can be formulated as a 
		system of ordinary differential equations~(ODEs)
		\begin{subequations}\label{eq:ss_model_affine}
			\begin{eqnarray}
			\dot{\vec{x}}(t) &=& \vec{f}\left(\vec{x}(t)\right) + D \vec{w}(t)\\
			\vec{x}(0) &=& \vec{x}_0\\
			\vec{y}(t) &=& \vec{c} \left(\vec{x}(t)\right) \, .
			\end{eqnarray}
		\end{subequations}
		The vector field 
		$\vec{f}(\vec{x})=(f_1(\vec{x}),\ldots, f_N(\vec{x}))^T$ 
		represents the internal dynamic interactions between the state 
		variables. In addition, the system receives $M$ unknown inputs, 
		which are collected in the vector function 
		\begin{equation}
			\vec{w}(t) = (w_1(t), \ldots, w_M(t))^T.
		\end{equation} 
		The $N \times M$ matrix $D$ describes, how the unknown input signals 
		are distributed over the $N$ states $\vec{x}$. The rows of 
		$D$
		provide information about the inputs acting on the respective state. 
		Zero rows correspond to states which are not directly affected by 
		model errors. As discussed below, 
		the unknown input $\vec{w}$ incorporates genuine inputs from the exterior 
		as well as  all possible types of model errors, including 
		incorrect interactions between internal states~$\vec{x}$ and 
		incorrect parameter values. 
		The unknown input aka model error $\vec{w}$ needs to be estimated from data.
		This would not be a problem, if the internal system state 
		$\vec{x}(t)$ could directly be measured. However, typically only a smaller set of 
		$P$ scalar output signals
		\begin{equation}
			\vec{y}(t)=(y_1(t), \ldots, y_P(t))^T=\vec{c}\left(\vec{x}(t) \right)
		\end{equation}
		is directly accessible to measurements. The map from the state to the 
		output is here assumed to be given by the function 
		$\vec{c}\left(\vec{x}\right)$. We assume that both $\vec{f}$ and $\vec{c}$ satisfy
		a Lipschitz condition.
		In addition, the initial state $\vec{x}_0$ needs to be specified. 

		It is often useful to represent systems of the form in 
		Eq.~\eqref{eq:ss_model_affine} as an influence graph, see 
		Fig.~2(a) for an example.~The nodes of this directed 
		graph correspond to the states 
		$x_i,\,i=1,\ldots,N$ ,
		and the edges represent the interactions encoded by~$\vec{f}$. 
		More precisely, a directed edge 
		$x_i \to x_j$ 
		indicates, that 
		$\frac{\partial f_j}{\partial x_i} \ne 0$ 
		for some 
		$\vec{x}$ 
		in the state space 
		$\mathcal{X}$.~The 
		state nodes targeted by unknown inputs are determined by the 
		nonzero rows of the matrix 
		$D$. Please note, that these input nodes are sometimes also 
		called driver nodes, in particular in the context of control.
		In the special case were each unknown input component 
		$w_k,\,k\in\{1,2,\ldots,M\}$ 
		affects only one state node we can choose each element 
		$D_{ik}$ 
		to be either zero or one and we have 
		\begin{equation}\label{eq:Dmatrix}
			D_{ik} \in \{0,1\}  \tag{10a}
		\end{equation}
		and
		\begin{equation}
			\sum_{k=1}^M D_{ik} = \begin{cases}
			1 & \text{if input}\;  w_k \; \text{affects state}\, x_i\\
			0 & \text{otherwise}\, , \tag{10b}
			\end{cases} \setcounter{equation}{10}
		\end{equation}
		as indicated in Fig.~2(a) by red arrows. Similarly, if 
		the output function is given by the linear relationship 
		$\vec{c}\left(\vec{x} \right)= C\vec{x}$ 
		with the 
		$P \times N$-matrix 
		$C$
		and if, in addition, we have 
		$C_{ij}\in\{0,1\}$ 
		and 
		$\sum_{i}C_{ij} \in\{0,1\}$,
		we can indicate the subset of directly measured states by blue 	
		arrows. These correspond to the nonzero columns of $C$.

		\subsection{Model errors and unknown inputs}
		We now show that the  unknown input function $\vec{w}(t)$  
		in Eq.~\eqref{eq:ss_model_affine} can represent both the effect 
		of an exterior dynamic system to our open system $\mathcal{S}_{o}$ 
		and the effect of structural model errors in $\vec{f}$, i.e.~erroneous descriptions 
		of the interactions between the internal state variables.  
		
		Let us start with a closed system $\mathcal{S}$ modelled by
		\begin{equation}
			\dot{\vec{x}}(t) = \vec{f}\left(\vec{x}(t)\right),
			\label{eq:nominal_model}
		\end{equation}
		where only the internal dynamics of the system is described, but 
		unknown inputs are not taken into account. In reality, however, the system 
		$\mathcal{S}$
		might be embedded into a larger system (see~Fig.~\ref{fig:Fig1}) and 
		interact also with external state variables 
		$\vec{\xi}(t)=(\xi_1(t),\ldots,\xi_{N_{\text{e}}}(t))^T$, 
		which are not included in the model in~Eq.~\eqref{eq:nominal_model}. 
		Instead, the joint dynamics of the system $\mathcal{S}$ and its 
		exterior should rather be described by 
		\begin{subequations}
			\label{eq:big_system}
			\begin{eqnarray}
			\dot{\vec{x}}(t) &=& \vec{\phi}\left(\vec{x}(t), 
			\vec{\xi}(t)\right)	
			\label{eq:big_system_x}\\
			\dot{\vec{\xi}}(t) &=& \vec{\psi}\left(\vec{x}(t),
			\vec{\xi}(t)\right) \, .
			\end{eqnarray}
		\end{subequations}
		The function 
		$\vec{\phi}$
		combines the effect of interactions between internal states 
		$\vec{x}$ 
		and the effect of the external states 
		$\vec{\xi}$
		on the dynamics of the internal states. The dynamics of the external 
		system is determined by 
		$\vec{\psi}$, 
		which is usually not known and thus difficult to include in the 	
		model. This unknown dynamics leads to the structural model error $\vec{\eta}$, which 
		can formally be defined as the discrepancy between the model in 
		Eq.~\eqref{eq:nominal_model} and the system Eq.~\eqref{eq:big_system_x}
		\begin{equation}
			\vec{\eta}\left(\vec{x}(t), \vec{\xi}(t)\right) := \vec{\phi}
			\left(\vec{x}(t), \vec{\xi}(t)\right) -\vec{f}\left(\vec{x}(t)
			\right) \, .
			\label{eq:model_error}
		\end{equation}
		Thus, instead of explicitly extending the simpler 
		model~Eq.~\eqref{eq:nominal_model} to the potentially complicated joint 
		system~Eq.~\eqref{eq:big_system}, we could correct the former by 
		adding an unknown input
		\begin{equation}
			\vec{\eta}(t):=\vec{\eta}\left(\vec{x}(t), \vec{\xi}(t)\right)
		\end{equation}
		to obtain
		\begin{equation}
			\dot{\vec{x}}(t) = \vec{f}\left(\vec{x}(t)\right) + 
			\vec{\eta}(t) \, .
			\label{eq:hi_model}
		\end{equation}
		Replacing 
		$\vec{\eta}(t)=D \vec{w}(t)$ 
		in Eq.~\eqref{eq:hi_model} leads to Eq.~\eqref{eq:ss_model_affine}.  
		
		Let us illustrate the relationship between model errors and unknown inputs 
		by a simple concrete example. Consider the following model 
		\begin{subequations}
		\label{eq:prot_cascade}
		\begin{eqnarray}
			\dot{x}_1(t) &=& - 0.2\, x_1(t)\label{subeq:x1_pc}\\
			\dot{x}_2(t) &=& \frac{x_1(t)}{1+x_4(t)} -x_2(t) \label{subeq:x2_pc} \\
			\dot{x}_3(t) &=& x_2(t)-x_3(t)\label{subeq:x3_pc}\\
			\dot{x}_4(t) &=& x_3(t) -x_4(t)\label{subeq:x4_pc}
		\end{eqnarray}
		\end{subequations}	
		for a protein cascade~\cite{milo_network_2002} and assume that Eq.~\ref{subeq:x3_pc}
		in the model is misspecified. Instead, assume that the correct description is 
		\begin{equation}
		\dot{x}_3(t) = x_2(t)-\frac{x_3(t)}{a(t)+x_3(t)}\label{subeq:x3_pc_correct}\\
		\end{equation}
		with a time dependent function $a(t)$. Thus, the degradation of $x_3$ is an 
		enzyme catalysed reaction with a time dependent affinity $a(t)$, which can 
		not be described by the mass action term in Eq.~\ref{subeq:x3_pc}. In addition, 
		the affinity $a(t)$ is controlled by an exogenous regulatory process, which is
		not covered by the model in Eq.~\eqref{eq:prot_cascade}. The model error (compare Eq.~\eqref{eq:model_error}) 
		is given by
		\begin{eqnarray*}
		\vec{\eta} (t) &=& \left(0,\,0,\, x_3(t)-\frac{x_3(t)}{a(t)+x_3(t)},\, 0\right)^T \\
		               &=& \left( D w_1(t) \right)^T
		\end{eqnarray*}
		with $w_1(t)=x_3(t)-\frac{x_3(t)}{a(t)+x_3(t)}$ and $D=(0,0,1,0)^T$. If the correct 
		relationship in Eq.~\eqref{subeq:x3_pc_correct} is unknown, we can 
	    replace Eq.~\eqref{subeq:x3_pc} by 
		\begin{equation}
		\dot{x}_3(t) = x_2(t)-x_3(t)+w_1(t)\label{subeq:x3_pc_model_error}\\
		\end{equation}
		and treat $w_1(t)$ as an external input, which needs to be estimated from measurement
		data. An example for such an unknown input estimate is provided in Subsec.~\ref{ssec:practinv}.

\section{Criteria for invertibility}\label{sec:criteria_inv}

	If the unknown input function 
	$\vec{w}(t)$ 
	can uniquely be reconstructed from the output signal 
	$\vec{y}(t)$, 
	we call the system invertible.~An algebraic criterion to check for 
	invertibility of a system was first derived for linear systems
	\cite{
		sain_invertibility_1969},
	see Subsec.~\ref{subsec:rank_crit} for details.~An algorithm to invert 
	the system, which terminates in the case of a non-invertibility, was also 
	first devised for the linear case
	\cite{
		silverman_inversion_1969},
	but later extended 
	\cite{
		hirschorn_invertibility_1979}
	to nonlinear affine models of the form given in 
	Eq.~\eqref{eq:ss_model_affine}. Another type of results is based on 
	differential geometric or differential algebraic criteria for the invariant control 
	distributions
	\cite{
		nijmeijer_invertibility_1982, 
		nijmeijer_right-invertibility_1986, 
		fliess_note_1986}.
	All these criteria involve algebraic manipulations of the systems 
	equations, which makes them useful for smaller models, but limits their 
	utility for large networks. In addition, the invertibility tests require 
	a full specification of the system (Eq.~\eqref{eq:ss_model_affine}), 
	including the complete functional form and the parameters of the 
	interaction terms encoded by~$\vec{f}$. 

	Here, we state the exact mathematical definition for invertibility of 
	dynamic systems
	\cite{
		hirschorn_invertibility_1979}.
	Then, we provide a new recursive algorithm to check invertibility for
	linear systems, which might be easier to apply to systems of moderate
	size, in contrast to the mathematically equivalent algebraic rank condition
	\cite{
		sain_invertibility_1969}.
	However, for large networks, the structural invertibility algorithm has the huge 
	advantages of scaling to large systems and of only requiring the topology 
	of the influence graph. 
	Invertibility is only a necessary condition and the robustness of unknown
	input reconstruction to measurement noise might be influenced by other factors than just the network 
	structure. We briefly touch upon this important problem, discuss the role
	of regularisation and provide a simple, but illustrative example for an unknown 
	input estimate.
	
	\subsection{Invertibility}
		Mathematically, invertibility means that the map from the unknown 
		input signal 
		$\vec{w}$ 
		to the output 
		$\vec{y}$ 
		is injective, which can be expressed as
		\cite{
			hirschorn_invertibility_1979}:
		\begin{definition}\label{def:invertibility}
			The system~\eqref{eq:ss_model_affine} is \emph{invertible} at the 
			initial state 
			$\vec{x}_0$, 
			if two distinct input signals 
			$\vec{w}(t)$ 
			and 
			$\tilde{\vec{w}}(t)$ 
			always induce two distinct outputs 
			$\vec{y}(t)\ne \tilde{\vec{y}}(t)$. 
			If the system is invertible in an open neighbourhood of 
			$\vec{x}_0$, 
			it is called \emph{strongly invertible} at 
			$\vec{x}_0$.~The system is strongly invertible, if there 
			exists a dense submanifold 
			$\mathcal{M}$ 
			of 
			$\mathcal{X}$, 
			such that the system is strongly invertible for any 
			$\vec{x}_0 \in \mathcal{M}$.
		\end{definition}

		For linear systems 
		\begin{subequations}\label{eq:ss_model_lin}
			\begin{eqnarray}
			\dot{\vec{x}}(t) &=& A \vec{x}(t) + D \vec{w}(t) \\ 
			\vec{x}(0) &=& \vec{x}_0\\
			\vec{y}(t) &=& C \vec{x}(t)
			\end{eqnarray}
		\end{subequations}
		with a real 
		$N\times N$
		matrix 
		$A$
		and an 
		$P \times N$ 
		output matrix 
		$C$, 
		all the three definitions are equivalent
		\cite{
			hirschorn_invertibility_1979},
		since invertibility at some 
		$\vec{x}_0\in\mathbb{R}^N$ 
		implies invertibility at all points in their state space 
		$\mathcal{X}=\mathbb{R}^N$.
		Such linear systems are typically obtained as local approximations 
		of the nonlinear model in Eq.~\eqref{eq:ss_model_affine}, where 
		$A$ 
		and 
		$C$ 
		are given by the Jacobi matrices of 
		$\vec{f}$
	 	and 
	 	$\vec{c}$, 
	 	respectively, taken at a certain reference point. 

		For linear systems~(Eq.~\eqref{eq:ss_model_lin}), invertibility is a 
		global property and thus it is sufficient to consider 
		the initial condition 
		$\vec{x}_0=\vec{0}$. 
		Then, the input-output map 
		$\Phi$ 
		is given by the linear operator 
		\begin{equation}\label{eq:lin_io}
			\vec{y}(t) =  \left(\Phi \vec{w}\right)(t) = \int^t_0 C 
			\exp[{A(t-s)}] D \vec{w}(s) \, \text{d}s \, .
		\end{equation}
		The linear system is invertible, if this operator is one-to-one. 
		Below, we state two different versions 			of an algebraic criterion to 
		decide invertibility for linear systems.
		More details on the three criteria are given in the
		Appendix~\ref{app:Proof}. Here, we only  motivate 
		the structure of the algebraic criteria below: Taking successive 
		time derivatives
		 $\vec{y}^{(l)} (t) $
		of the output (Eq.~\eqref{eq:lin_io}), for $l\in\{1,2,\ldots\}$
		\begin{equation}\label{eq:output_deriv}
			\vec{y}^{(l)} (t) =  CA^l \vec{x}(t) + \sum_{k=0}^{l-1} C 
			A^{l-k-1} D \vec{w}^{(k)}(t) \, , 
		\end{equation}
		and evaluating at 
		$t=0$, 
		we obtain the sequence of linear equations
		\begin{eqnarray*}
			\dot{\vec{y}} (0) &=&  CD\vec{w}(0) \\
			\ddot{\vec{y}} (0) &=&   CAD\vec{w}(0) + CD\dot{\vec{w}}(0)\\
			\vdots& \\
			\vec{y}^{(l)} (0) &= &  CA^{l-1}D\vec{w}(0) + CA^{l-2}D
			\dot{\vec{w}}(0) + \\
			&&\quad \ldots + CD\vec{w}^{(l-1)}(0) \, . 
		\end{eqnarray*}
		Invertibility implies, that we can solve these linear equations 
		uniquely for 
		$\vec{w}^{(l)}(0)$
		for given 
		$\vec{y}^{(l)} (0)$, 
		or, equivalently, that 
		$\vec{y}^{(l)} (0)=\vec{0}$ 
		implies 
		$\vec{w}^{(l)}(0)=\vec{0}$. 
		Basically, the two equivalent algebraic criteria in Subsecs.~
		\ref{subsec:rank_crit} and~\ref{subsec:iter_crit} provide 
		conditions for unique solutions of this system. 
		
	\subsection{Invertibility versus Unknown Input Observability}
		Let us clarify the relationship between unknown input observability and invertibility. 		
		The notion of unknown input observability is not uniquely defined in the literature. 
		Some authors call a system unknown input observable, if the state is completely or partially 
		observable even in the presence of unknown inputs~(see e.g. \cite{martinelli_nonlinear_2019}).
		This weaker definition does not necessarily imply that the unknown input itself can be reconstructed. 
		Other authors define unknown input observability in the stricter sense that both the systems state as
		well as the unknown input can be inferred from the outputs~\cite{boukhobza_state_2007}. 
		
		Invertibility or structural invertibility ensures, that the unknown input can uniquely be 	
		reconstructed from the output.  It does not necessarily imply that the system state
		can simultaneously be observed~\cite{boukhobza_state_2007}.  Only if the initial state 
		$\vec{x}_0$ is known, then invertibility is not only necessary but also 
		sufficient for simultaneous state and input observability. 
		
		It is only recently, that a general algorithm for testing the weaker form of unknown input observability 
		of nonlinear systems exists~\cite{martinelli_nonlinear_2019}. This algorithm is based 
		on symbolic computation and thus restricted to very small systems with only a 
		few state variables. 	
		
	\subsection{Recursive Algebraic Criterion for Invertibility of Linear Systems}\label{subsec:iter_crit}
		For the first version of the algebraic criterion we define a sequence 
		of block matrices
		\begin{equation}
				R_l:=\left[ (CD)(CAD)\ldots (CA^lD)\right] \, , \quad
				l\in \mathbb{N}_0 \, ,
		\end{equation}
		which appear in the derivatives of the 
		output~(Eq.~\eqref{eq:output_deriv}). Each matrix 
		$R_l$ 
		in the sequence has 
		$P$ 
		rows and 
		$(l+1)M$ 
		columns. Recall, that 
		$P$ 
		was the number of measurement signals and 
		$M$ 
		the number of unknown inputs. For each matrix 
		$R_l$ 
		in the sequence, the null space is defined as
		\begin{equation}
			\ker R_l = \left\{\vec{v} \in \mathbb{R}^{(l+1)M} \left|\, R_l
			\vec{v} = \vec{0} \right. \right\} \, ,  \quad l\in  \mathbb{N}_0 \, .
		\end{equation}
		In addition, we recursively define the following sets 
		\begin{eqnarray*}
			K_0 &:=& \ker R_0\backslash \{0\}  \\
			K_l &:=& \ker R_l \cap \left(\mathbb{R}^M \times K_{l-1} \right)  \, .
		\end{eqnarray*}
		Here, ``$\times$'' indicates the cartesian product of two sets. Now we 
		can state the invertibility criterion: The linear system in 
		Eq.~\eqref{eq:ss_model_lin} is invertible, if and only if 
		$K_l$ 
		is the empty set 
		$\emptyset$ 
		for some 
		$l \leq N-1$. 

		To apply this criterion we have to calculate the null spaces of all 
		$R_l$ 
		and then iteratively to evaluate the sequence of sets 
		$K_l$, 
		starting from 
		$l=0$. 
		The iteration terminates if 
		$K_l=\emptyset$ 
		for some 
		$l \leq N-1$, 
		indicating invertibility. If no such 
		$l$
		can be found, the system is not invertible. 

			\begin{figure*}[htb]
			\centering
			\includegraphics[width=2\columnwidth]{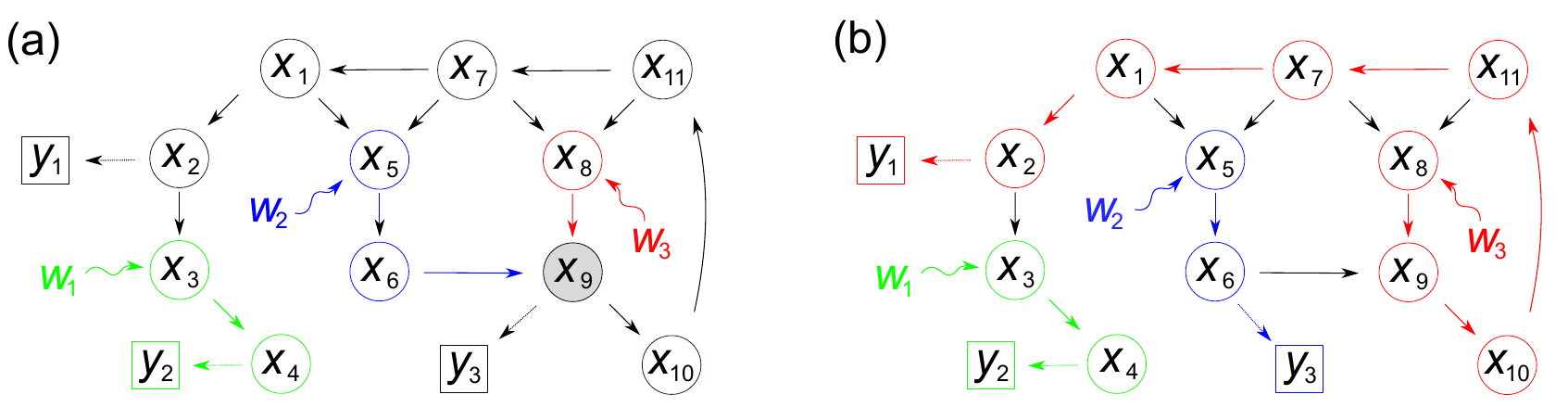}
			\caption{
				The graphical condition for invertibility. 
				(a) The system represented by the influence graph with input 
				node set 
				$S=\{x_3, x_5, x_8\}$ 
				is non-invertible, if the output measurements 
				$\vec{y} =(y_1,y_2,y_3)$ 
				(squares) are placed at the sensor node set 
				$Z = \{x_2, x_4,x_9\}$. 
				The only possible paths from the input nodes 
				$x_5$ 
				and 
				$x_8$ 
				to any of the outputs (here 
				$y_3$) 
				collide at 
				$x_9$. 
				(b) The same graph with the same input nodes 
				$S$ 
				as in (a), but with outputs 
				$\vec{y} =(y_1,y_2,y_3)$ 
				(squares) placed at 
				$Z=\{x_2, x_4, x_6\}$ 
				(i.e. 
				$x_6$ 
				replaces the sensor node 
				$x_9$ 
				in (a)) is invertible. There is a family 
				$\Pi = \{\textcolor{green}{w_1 \rightsquigarrow x_3 
				\rightarrow x_4 \rightarrow y_2},\, \textcolor{blue}{w_2 
				\rightsquigarrow x_5 \rightarrow x_6 \rightarrow y_3},\, 
				\textcolor{red}{w_3 \rightsquigarrow x_8 \rightarrow x_9 
				\rightarrow x_{10} \rightarrow x_{11} \rightarrow x_7 
				\rightarrow  x_1 \rightarrow x_2 \rightarrow y_1}\}$ 
				of  three non intersecting (node disjoint) paths joining each 
				input with one output.
				}
			\label{fig:Fig3}
		\end{figure*}

	\subsection{Rank Condition for Invertibility of Linear Systems}
	\label{subsec:rank_crit}
		The iterative criterion above is equivalent to the following 
		algebraic rank condition proofed by Sain and Massey
		\cite{
			sain_invertibility_1969}:
		Consider the sequence of matrices 
		\begin{equation}
			Q_l := \begin{bmatrix}
			CD & CAD & \dots & CA^lD \\ 
			0  & CD  & \dots & CA^{l-1}D \\
			\vdots&    &   \ddots   & \vdots  \\
			0     &  \dots &     &  CD
			\end{bmatrix}
		\end{equation}	
		for 
		$l\in \mathbb{N}_0$. 
		The linear system in Eq.~\eqref{eq:ss_model_lin} is invertible, if 
		and only if 
		\begin{equation}
			\text{rank}\,Q_{N-1} - \text{rank}\, Q_{N-2} = M\,.
		\end{equation}
		As before, $N$ is the number of states in the system and $M$ the number of inputs.
		Thus, the criterion requires to compute the rank of two matrices. The 
		size of these matrices increases quadratically with the number of 
		states 
		$N$ 
		in the system. Such rank computations can be very memory intensive 
		for large networks with many nodes.

		It is worthwhile remarking on an interesting property of invertibility.
		For an invertible system, the null space of the 
		operator 
		$\Phi$ 
		is zero dimensional, containing as its single element the zero input 
		$\vec{w}(t) \equiv \vec{0}$. 
		For a non-invertible system, the null space of 
		$\Phi$ 
		is always infinite dimensional~(see~Lemma~\ref{lemma:xiShift} in the 
		Appendix). This means that for non-invertible systems there are 
		infinitely many independent inputs which cannot be distinguished from each 
		other. This shows, that there is no such thing like 
		``nearly invertible''. Thus, any algorithm attempting to infer the inputs for 
		the outputs is bound to fail without further assumptions about the 
		inputs. Assumptions like smoothness and sparsity of the input signals 
		can be encoded into these inversion algorithms by using suitable 
		regularisation schemes
		\cite{
			engelhardt_learning_2016}
		or Bayesian priors
		\cite{
			engelhardt_bayesian_2017}.
		However, even these additional smoothness and sparsity assumptions 
		restricting the domain of the input-output map are not always 
		sufficient for invertibility 
		\cite{
			engelhardt_learning_2016}.

	\subsection{Graphical Criterion for Linear and Nonlinear Systems}\label{subsec:graphinv}
		\begin{figure*}[t]
			\centering
			\includegraphics[width = 2\columnwidth]{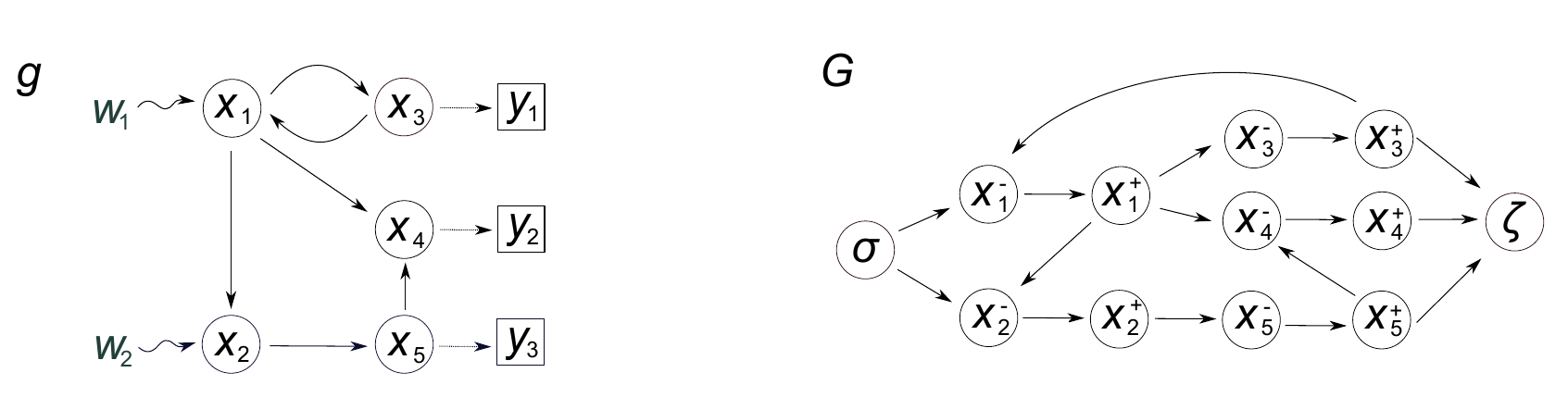}
			\caption{
				An example for the structural invertibility algorithm 
				counting the number of node-disjoint paths connecting the 
				input node set 
				$S=\{x_1,x_2\}$ 
				with the output node set 
				$Z=\{x_3,x_4, x_5\}$. The influence graph 
				$g$ 
				is transformed into the graph 
				$G$ 
				by replicating each of the state nodes 
				$x_1,\ldots, x_5$ 
				to 
				$x_1^+,\ldots, x_5^+$ 
				and 
				$x_1^-,\ldots, x_5^-$, 
				thereby separating the ingoing and outgoing edges. A family 
				of node-disjoint paths in 
				$g$ 
				corresponds to a family of edge-disjoint paths in 
				$G$. 
				Then, an additional source node 
				$\sigma$ 
				is connected to the inputs and an additional sink node 
				$\zeta$ 
				to the outputs and edge weights of 
				$1$ 
				are added to 
				$G$. 
				The maximum flow from 
				$\sigma$ 
				to 
				$\zeta$ 
				corresponds to the number of node-disjoint paths from 
				$S$ 
				to 
				$Z$ 
				in g.
				}
			\label{fig:Fig4}
		\end{figure*}

		The quite intricate algebraic conditions can be replaced by a simple 
		graphical criterion
		\cite{
			dion_generic_2003},
		see~Fig.~\ref{fig:Fig3}. Recall, that in the influence graph 
		$g$ 
		each state variable 
		$x_i$
		is represented as a node and the edges are determined by the 
		adjacency matrix 
		$A$: 
		For each 
		$A_{ji}\neq 0$ 
		draw an directed edge 
		$i\to j$. 
		Now assume, that the columns of the input matrix consist only of a 
		subset of 
		$M$
		canonical basis vectors 
		$\vec{e}_k\in\mathbb{R}^N$. 
		Thus 
		$D_{ik}\in\{0,1\}$ 
		with 
		$\sum_{k}D_{ik} \in\{0,1\}$.~Then, the nonzero rows of 
		$D$ 
		indicate the states receiving an input signal. Denote these 
		$M$ 
		\emph{input nodes} as
		\begin{equation}
			S=\{s_1,\ldots, s_M\}\subseteq\{x_1,\ldots,x_N\} \, .
		\end{equation} 
		Similarly, we assume that the output matrix 
		$C$ 
		has only 
		$P$ 
		rows which are a subset of the canonical basis of 
		$\mathbb{R}^P$ 
		and thus 
		$C_{ji}\in\{0,1\}$ 
		with 
		$\sum_{j}C_{ji} \in\{0,1\}$. 
		Then, the nonzero columns of 
		$C$ 
		indicate the 
		\emph{output} or \emph{sensor nodes} 
		\begin{equation}
			Z=\{z_1,\ldots,z_P\}\subseteq\{x_1,\ldots,x_N\} \, ,
		\end{equation}			
		i.e.~the state nodes for which direct measurements are available.   

		Now, the graphical criterion can be stated as follows: The linear 
		system in Eq.~\eqref{eq:ss_model_lin} is structurally invertible, if 
		and only if there is a family 
		$\Pi$ 
		of 
		$M$ 
		directed paths in the influence graph 
		$g$ 
		fulfilling the following conditions
		\begin{enumerate}
			\item[(i)] Each path in $\Pi$ starts in $S$ 
			and terminates in $Z$.
			\item[(ii)] All paths of $\Pi$ are pairwise node-disjoint.
		\end{enumerate}
		In the following, we will call the triplet 
		$(S,g,Z)$ 
		consisting of the influence graph 
		$g$, 
		the input node set 
		$S$ 
		and the output node set 
		$Z$ 
		invertible, if the structural invertibility criterion is fulfilled. 
 
		To put it differently: There must be a family 
		$\Pi$
		of directed paths in 
		$g$ 
		connecting each input node in 
		$S$ 
		with an output node in 
		$Z$ 
		and no two paths in the family intersect at any node of the influence 
		graph. If no such family of node-disjoint paths exists, the systems 
		is structurally non-invertible. For the example graphs in 
		Fig.~\ref{fig:Fig2}(a,b), this path condition is illustrated 
		in Fig.~\ref{fig:Fig3}(a,b) for the non-invertible (a) and invertible 
		case (b), respectively.  Obviously, a system with fewer output nodes 
		than input nodes is never invertible. 

		This intuitive graphical condition implies, that only the structure 
		of the influence graph and the position of the inputs and outputs, 
		i.e.~only the patterns of nonzero entries in the systems matrices 
		$A$, 
		$D$, 
		and 
		$C$, 
		are relevant for invertibility 
		\cite{
			dion_generic_2003}.~Indeed,
		structural invertibility and algebraic invertibility 	
		coincide up to pathological cases, where the graphical condition 
		could indicate structural invertibility, whereas none of the 
		algebraic conditions would be fulfilled due to an exact cancellation 
		of numerical terms.~These pathological conditions are irrelevant in 
		practice, since any arbitrarily small numerical perturbation of one 
		of the nonzero terms in the systems matrices would repair 
		invertibility. Or, in mathematical language: The set of systems 
		matrices 
		$A$, 
		$D$, 
		and 
		$C$, 
		for which the graphical and the algebraic conditions give 
		contradictory results is a set of measure zero. The situation is 
		completely analogous to the structural and algebraic controllability 
		or observability conditions
		\cite{
			lin_structural_1974, 
			dion_generic_2003, 
			liu_controllability_2011, 
			liu_observability_2013}.

		The structural invertibility condition was extended to
		nonlinear systems of the form in Eq.~\eqref{eq:ss_model_affine}, 
		see
		\cite{
			wey_rank_1998}
		for details. This means, that we can replace the matrix 
		$A$ 
		by the Jacobi matrix of 
		$\vec{f}$ 
		and the output matrix 
		$C$ 
		by the Jacobi matrix of 
		$\vec{c}$ 
		at some point of the state space 
		$\mathcal{X}$ 
		to obtain the systems graph and the output node set 
		$Z$. Thus, the structural properties of 
		the linearisation of Eq.~\eqref{eq:ss_model_affine} are also sufficient to 
		detect the invertibility of a nonlinear system. There is one subtlety
		for nonlinear systems: The structural invertibility condition does 
		not imply regularity of the system, which is relevant
		for feedback systems, see~\cite{wey_rank_1998} for further details.

	\subsection{Structural Invertibility Algorithm}\label{ssec_sialg}
		The graphical criterion for (structural) invertibility requires to 
		count the number of node-disjoint paths connecting the input nodes 
		$S$ 
		with the output nodes 
		$Z$. 
		Counting all these paths in a combinatorial manner is not feasible 
		for larger systems. Thanks to the Max-Flow-Min-Cut-Theorem 
		\cite{menger_zur_1927, 
		dion_generic_2003, 
		korte_combinatorial_2018}, the graphical node disjoint path counting problem 
		can be reformulated as flow problem, which can efficiently be solved. 
		
		As an initial step of the algorithm, the influence graph 
		$g$ 
		is transformed to a corresponding flow graph 
		$G$ 
		by copying each node 
		$i$ 
		to separate the ingoing and outgoing edges (see Fig.~\ref{fig:Fig4}). 
		Now, a familiy 
		$\Pi$ 
		of node-disjoint paths in the original graph 
		$g$ 
		corresponds to a family of edge-disjoint paths in 
		$G$. 
		In a second step, an additional source node 
		$\sigma$ 
		is connected to each of the input nodes and an  additional sink node 
		$\zeta$ 
		is connected to each of the output nodes. If each edge in the 
		resulting graph 
		$G$ 
		is assigned a weight of 
		$1$, 
		the maximum flow from source 
		$\sigma$ 
		to sink 
		$\zeta$ 
		in 
		$G$ 
		equals the number of edge-disjoint paths from source to sink, and 
		thus the number of node-disjoint paths from 
		$S$ 
		to 
		$Z$ 
		in the original graph 
		$g$. In our implementation we use the Goldberg-Tarjan algorithm~\cite{goldberg_tarjan_1988}
		to efficiently compute the maximum flow. Several alternative algorithms exist in 
		the combinatorial optimisation literature, see e.g.
		\cite{
			korte_combinatorial_2018}.

		To analyse the computational complexity of the structural invertibility algorithm,
		let $N$ and $E$ be the number of nodes and edges in the original graph $g$. As before, 
		we denote the number of input nodes by $M$ and the number of sensor nodes by $P$. In directed graphs 
		the number of edges is limited by $E \leq N^2$. For large networks
		we can assume $M\approx P  << N$. To create the flow graph~$G$, $n=2N+2$ nodes and $e=N+E+M+P$ edges are created. 
		On $G$ we use the Goldberg-Tarjan algorithm with running time scaling like  
		$\mathcal{O}(n^2 \sqrt{e})$ to compute the maximum flow. All together we find that
		the structurally invertibility algorithm has a running time of~$\mathcal{O}(N^3)$. Please
		note that there are even more efficient optimised versions of the Goldberg-Tarjan algorithm 
		with better running time~%
		\cite{
			korte_combinatorial_2018},
		but for all our purposes the standard version was sufficient. 
		
		Our implementation is based on the \texttt{python networkx} package. On a single node
		of an \texttt{Intel® Xeon® Processor E5-2690 v2} we need an average time of $0.15 \pm 0.03$ 
		seconds for a network with $N=10^3$ nodes and $2.4 \pm 0.4$ seconds for $N=10^4$ nodes to 
		decide structural invertibility. 		
		
	\subsection{Practical invertibility and robustness}\label{ssec:practinv}
		Invertibility (or structural invertibility) is a necessary condition for the unique 
		reconstruction of unknown inputs $\vec{w}(t)$ from outputs $\vec{y}(t)$. 
		If the input-output map $\Phi$ defined by the 
		general system in Eq.~\eqref{eq:ss_model_affine} is invertible, the operator equation 
		\begin{equation}
		\vec{y}=\Phi \vec{w} \label{eq:operator_eqn}
		\end{equation}
		for given $\vec{y}=\vec{y}(t)$ has a unique solution $\vec{w}(t)$ and the 
		inverse operator $\Phi^{-1}$ exists.
		
		In reality, we have to reconstruct the unknown input $\vec{w}$ from measured output data 
		$\vec{y}^{\text{data}}$, which will always be subject to measurement errors and noise. Therefore,
		the data based estimate $\hat{\vec{w}}=\Phi^{-1}\vec{y}^{\text{data}}$ will differ from the 
		true input $\vec{w}=\Phi^{-1}\vec{y}$. For a discontinuous inverse operator~$\Phi^{-1}$,
		the difference between $\hat{\vec{w}}$ and $\vec{w}$ can be drastic. Unfortunately,
		the inverse operator of the general nonlinear system Eq.~\eqref{eq:ss_model_affine} is not 
		continuous. Thus, estimating the unknown input from real data remains an ill posed 
		problem~\cite{engl_regularization_2000, nakamura_potthast_inverse_2015}, even if the system is invertible and 
		$\Phi^{-1}$ exists.
		
		The underlying reason for the discontinuity of the inverse input-output operator $\Phi^{-1}$ 
		of the linear system (Eq.~\eqref{eq:ss_model_lin}) is a well known 
		theorem from the theory of inverse problems~(see e.g. \cite{engl_regularization_2000, nakamura_potthast_inverse_2015}):
		Linear compact operators with an infinite dimensional range cannot have a continuous inverse.
		The linear input-output operator Eq.~\eqref{eq:lin_io} corresponding to the linear system in 
		Eq.~\eqref{eq:ss_model_lin} is an integral operator and thus it is compact and has infinite range, 
		as shown in the Appendix (Theorem~\ref{lemma:xiShift}). For nonlinear operators, a similar 
		theorem states that completely-continous operators with infinite range cannot have a continuous
		inverse. This indicates, that the inverse of the nonlinear system Eq.~\eqref{eq:ss_model_affine} 
		cannot be continuous.
		
		The degree of discontinuity of the inverse to the linear compact operator in Eq.~\eqref{eq:lin_io} 
		can be quantified by means of the  singular value decomposition (SVD). Since $\Phi$ has an 
		infinite dimensional range, its SVD is an infinite series. The infinite series 
		$(\sigma_k), \,k=1,2,\ldots$ of singular values is usually ordered by decreasing 
		magnitude ($\sigma_{k+1}\ge\sigma_k)$. The smaller singular values determine the response of~$\Phi^{-1}$ 
		to high frequency components of $\vec{y}^{\text{data}}$ and are thus responsible for the 
		discontinuity of the inverse. Thus, it is in principle possible to quantify the degree 
		of discontinuity by the speed of decay of the sequence~$(\sigma_k)$. The problem in 
		Eq.~\eqref{eq:operator_eqn} is considered to be mildly ill-posed, if the sequence of singular 
		value $(\sigma_k)$ decays at most with polynomial speed, whereas it is called 
		severely ill-posed, if $(\sigma_k)$ decays faster than any polynomial~\cite{engl_regularization_2000}. 	
		However, this approach is not  straightforward to implement, because it requires computing the 
		spectrum of the gramian operator $\Phi^*\,\Phi$, where $\Phi^*$ is the adjoint of 
		Eq.~\eqref{eq:lin_io}. In addition, it is not
		yet clear whether the SVD can also be useful for the inverse nonlinear input-output operator
		corresponding to nonlinear system Eq.~\eqref{eq:ss_model_affine}, possibly after a suitable 
		linearisation. This is certainly beyond the scope of this text and we leave this as an interesting 
		direction for further research. 
		
		Please note, that the situation for invertibility is different from that of controllability or 
		observability~
		\cite{ 	krener_meas_2009,
			cornelius_controlling_2011, 
			sun_controllability_2013, 
			cornelius_realistic_2013, 
			yan_spectrum_2015,
			summers_submodularity_2016, 
			aguirre_observ_2018,
			haber_state_2018},
		where the corresponding gramian matrices correspond to operators with 
		finite dimensional range. Consequently, there is a smallest singular value which can 
		be used as the condition number characterising the degree of controllability or observability, 
		respectiveley. 
		
		\subsubsection*{Regularisation of Unknown Inputs}
		There are several algorithms for estimating the unknown input $\vec{w}$
		from measurement data, ranging from feedback controllers via modifications of the nonlinear Kalman filter to 
		moving horizon estimation~
		\cite{  kuhl_real-time_2011,
			schelker_comprehensive_2012,
			Fonod_boblin_unknown_2014, 
			engelhardt_learning_2016, 
			engelhardt_bayesian_2017, 
			chakrabarty_state_2017, 
			tsiantis_optimality_2018}.
		We can not discuss all these approaches here, but it is instructive to 	
	    briefly discuss a simple version of the optimisation based approach, where an error functional $J[\vec{w}]$ is minimised
		with respect to $\vec{w}(t)$. This leads to the following 
		optimal control problem
		\begin{equation}\label{eq:estopt}
		\begin{aligned}
			&\text{minimize} \, J[\vec{w}] = d[\vec{y}^\text{data},\vec{y}] + R[\vec{w}] \\
			&\text{subject to } \\
			&\dot{\vec{x}}(t) = \vec{f}(\vec{x}(t)) + \vec{w}(t) \\
			&\vec{y}(t) = \vec{c}(\vec{x})\,.
		\end{aligned}	
        \end{equation}		
		Here, $d$ quantifies the fit of the corrected 
	    model output $\vec{y}$ to the data~$\vec{y}^
	    \text{data}$. A typical choice is the squared error
	    \begin{eqnarray*} 
		d[\vec{y}^\text{data},\vec{y}] &=& \frac{1}{T}\sum_{i=1}^P \int_0^T 
		\left(y_i^\text{data}(t)-
		y_i(t) \right)^2 \text{d}t \\
		&\approx& \frac{1}{n}\sum_{i=1}^P \sum_{k=1}^n  
		\left(y_i^\text{data}(t_k)-
		y_i(t_k) \right)^2\,.
		\end{eqnarray*}
		Usually, one has discrete time measurements
    	$\vec{y}^\text{data}(t_k)$ at time points $t_k\in [0,T],\,k=0,1,\ldots,n$ and the discrete time 
    	squared error in the second row is used instead of the integral. If the components of the 
    	output function have very different magnitudes, it is often also useful to use a weighted
    	squared error. In addition, for zero mean gaussian measurement noise $\vec{y}^{\text{data}}-\vec{y}$,
    	the squared error corresponds to the log-likelihood function~\cite{nakamura_potthast_inverse_2015}.

    	The regularisation term $R[\vec{w}]$ in Eq.~\eqref{eq:estopt} can be chosen to
    	penalise overly complex input functions $\vec{w}(t)$.
    	The regularisation 
    	parameter $\alpha \ge 0$ provides a
    	way of balancing the data fit ($d$) with the complexity of the estimated function $\vec{w}(t)$.
    	There are several ways to select the regularisation parameter. One useful idea is known as the
    	discrepancy principle~\cite{nakamura_potthast_inverse_2015}, where the regulation parameter
    	is chosen such that the data error $d$ is approximatl	y equal to the level of measurement noise. 
  
  		Even for invertible systems the regularisation is necessary, because the unregularised least 
    	squares fit ($\alpha=0$) 
    	will be very sensitive to measurement errors, again a symptom of the discontinuity of the 
    	inverse input-output operator~$\Phi^{-1}$ corresponding to Eq.~\eqref{eq:ss_model_affine}.
    	Typical choices for the regularisation function are
    	\begin{equation*}
    	R[\vec{w}] = \frac{1}{T} \sum_{i=1}^p  \int_0^T w_i^2(t) \text{d}t\,,
    	\end{equation*}
    	which penalises the squared 2-norm of the unknown input $\vec{w}$ or
    	\begin{equation}
    	R[\vec{w}] = \frac{1}{T} \int_0^T \dot{w}^2_i(t) \text{d}t\,,\label{eq:reg_deriv}
    	\end{equation} 
    	which penalises the norm of the first derivative $\dot{\vec{w}}$. These two examples
    	are known as Tikhonov regularisation in function spaces~\cite{nakamura_potthast_inverse_2015}. 
    	For linear operators
    	like the Eq.~\eqref{eq:lin_io}, the effect of the regularisation is to suppress the effect
    	of small singular values.  There are many more possible choices for regularisation terms,
    	e.g. sparsity promoting regularisation~\cite{engelhardt_learning_2016}. 
    	Often, regularisation terms are also chosen to render the optimisation problem convex, 
    	which avoids problems with local minima~\cite{abarbanel_predicting_2013}. 
    	
    	\subsubsection*{Example for an unknown input estimate}
    	In our computational experiments 
    	we found the regularisation of the derivative as in Eq.~$\eqref{eq:reg_deriv}$ to yield 
    	good results for the case of additive measurement noise, where 
    	$\vec{y}^{\text{data}}-\vec{y}$ is a zero mean stochastic disturbance of the 
    	output measurement. This is illustrated in Fig.~\ref{fig:Fig5} for the model of a 
	protein cascade~\cite{milo_network_2002} 
	\begin{equation}
		\begin{aligned}
			\dot{x}_1(t) &= -0.2\, x_1(t) +w_1(t)\\
			\dot{x}_2(t) &= \frac{x_1(t)}{1+x_4(t)} -x_2(t) \\
			\dot{x}_3(t) &= x_2(t)-x_3(t)+w_2(t)\\
			\dot{x}_4(t) &= x_3(t) -x_4(t)\,,
		\end{aligned} \label{eq:ExampleSystem}
	\end{equation}
	with known initial value
	\begin{equation}
		\vec{x}_0 = (1,0.5,0.5,0.1)^T \, .
	\end{equation}
	We generated synthetic data by solving the system of ODEs for a given 
	input function $\vec{w}(t)$ acting on the input nodes $S=\{x_1,x_3\}$ 
	and adding gaussian noise to the output, see 
	Fig.~\ref{fig:Fig5}(a,b). We then tried
	to recover the input from the noisy output data by solving the regularised optimal 
	control problem in Eqs.~\eqref{eq:estopt} and \eqref{eq:reg_deriv}. 
	If the measurement nodes are 
	$Z=\{x_3,x_4\}$, the measured output data can accurately be fitted~(Fig.~\ref{fig:Fig5}(b)),
	but the recovered inputs~\ref{fig:Fig5}(c) do not correspond to the true inputs. The 
	reason is that the system with the output nodes $Z=\{x_3,x_4\}$ is not structurally 
	invertible. For the sensor node set $Z=\{x_2,x_4\}$, the system is structurally invertible
	and the solution of the optimal control problem (Eqs.~\eqref{eq:estopt} and \eqref{eq:reg_deriv}) 
	provides a reconstruction of the inputs from the noisy measurements, see Fig.~\ref{fig:Fig5}(d-f).
	In particular, the magnitude of the estimate for $w_1$ is very small and 
	differs from the true unknown input $w_1=0$ only 
	due to transient effects of the numerics and measurement noise. 
	\begin{figure*}
			\centering
			\includegraphics[width = 2\columnwidth]{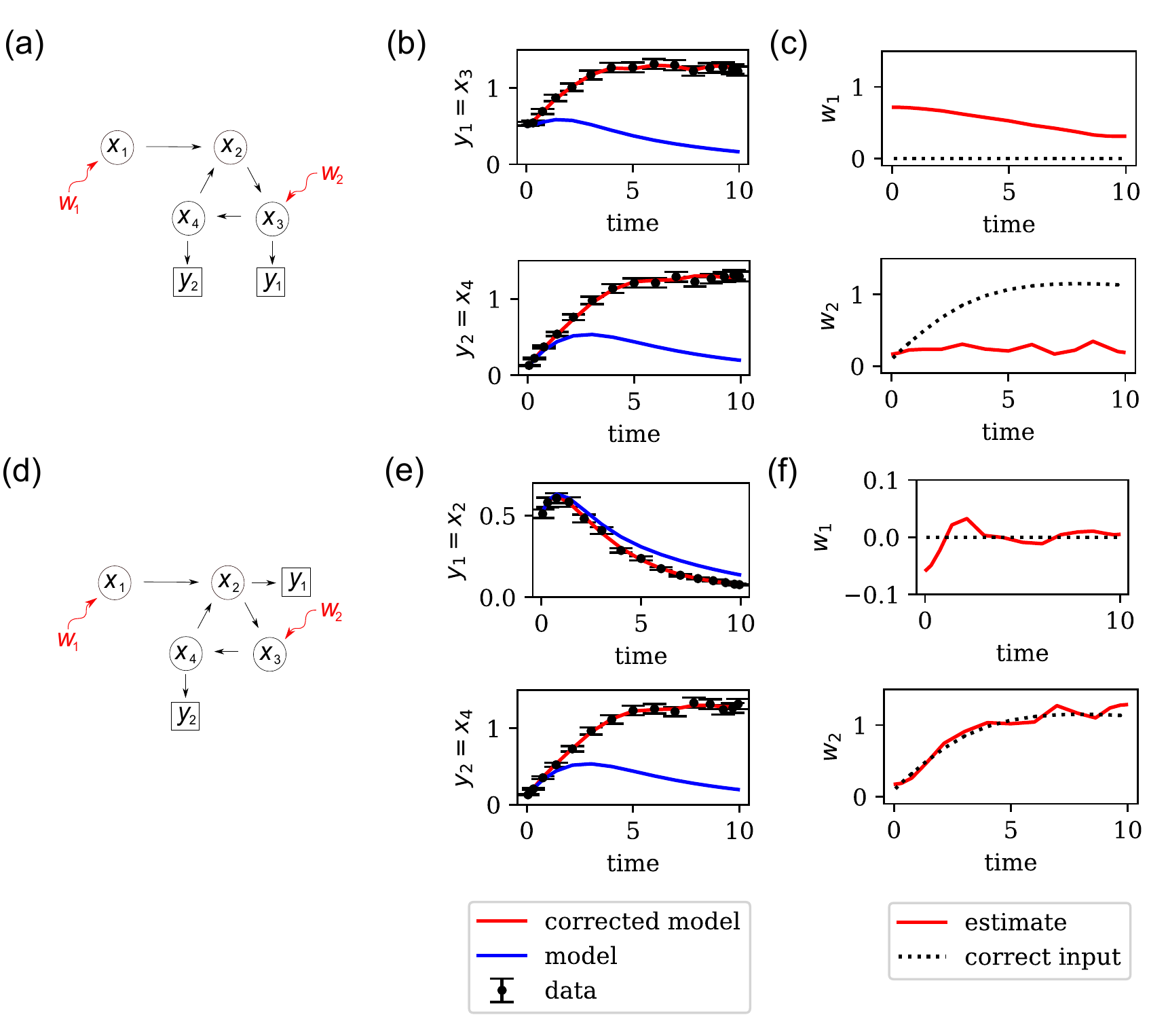}
			\caption{Estimation of unknown inputs to a simple protein cascade. 
				(a) The influence graph of the system in Eq.~\eqref{eq:ExampleSystem} with 
				input nodes $S=\{x_1, x_3\}$ and output nodes $y_1=x_3$ and $y_2=x_4$.
				(b) The model output for the system without unknown inputs (blue) can 
				not reproduce the observed data (dots with error bars for measurement noise).
				Solving the regularised optimisation problem~(see Eqs.~\eqref{eq:estopt} and
				\eqref{eq:reg_deriv}) provides accurate fits to the data (red line).  
				(c) However, the estimates of the unknown inputs (red lines) 
				are incorrect (dashed lines indicate the true inputs), 
				because the system is not structurally invertible. 
				(d) The system is structurally invertible, if $y_1=x_2$ and $y_2=x_4$ are 
				measured. (e,f) The fit to the measurements in (e) is now sufficient to 
				estimate the unknown inputs in (f). Please note also 
				the different scale of the plots for in (c) and (f). All values are understood in arbitrary units. 
				}
			\label{fig:Fig5}
		\end{figure*}
		
		This simple example illustrates again the importance of structural invertibility
		as a prerequisite for systems inversion. As discussed above, the accuracy of the 
		estimates can vary with the degree of continuity of the inverse input-output
		operators, which itself depends on the functional form and on the specific parameters of the 
		ODE. However, structural invertibility is a necessary requirement
		to estimate unknown inputs or model errors. Therefore, we will focus on structural 
		aspects of invertibility in the remainder of this text.

\section{Structural Invertibility of Complex Networks}\label{sec:net}
    It is natural to ask whether certain network properties affect structural invertibility.
	It has been shown previously, that important 
	systems properties including controllability
	\cite{
		liu_controllability_2011},
	observability 
	\cite{
		liu_observability_2013} 
	or target controllability 
	\cite{
		gao_target_2014} 
	are related to network structure, see
	\cite{
		motter_networkcontrology_2015, 
		liu_control_2016} 
	for reviews. 

	In this section we will explore the invertibility of large simulated and 
	real networks using  the very efficient structural invertibility 
	algorithm from Subsec.~\ref{ssec_sialg}. To mimick scenario SC~\Romannum{1}, where we have no influence on the choice of the input and 
	output nodes, we will first use a uniform random sampling scheme for the 
	selection of both node sets. Later, we will investigate the effect of 
	hubs on invertibility and study the preferential selection of hubs as 
	input or output nodes. 

	\subsection{Invertibility of random and scale free networks under uniform sampling~(scenario~SC~\Romannum{1})}\label{subsec:unif}
		\begin{figure*}
			\centering
			\includegraphics[width = 2\columnwidth]{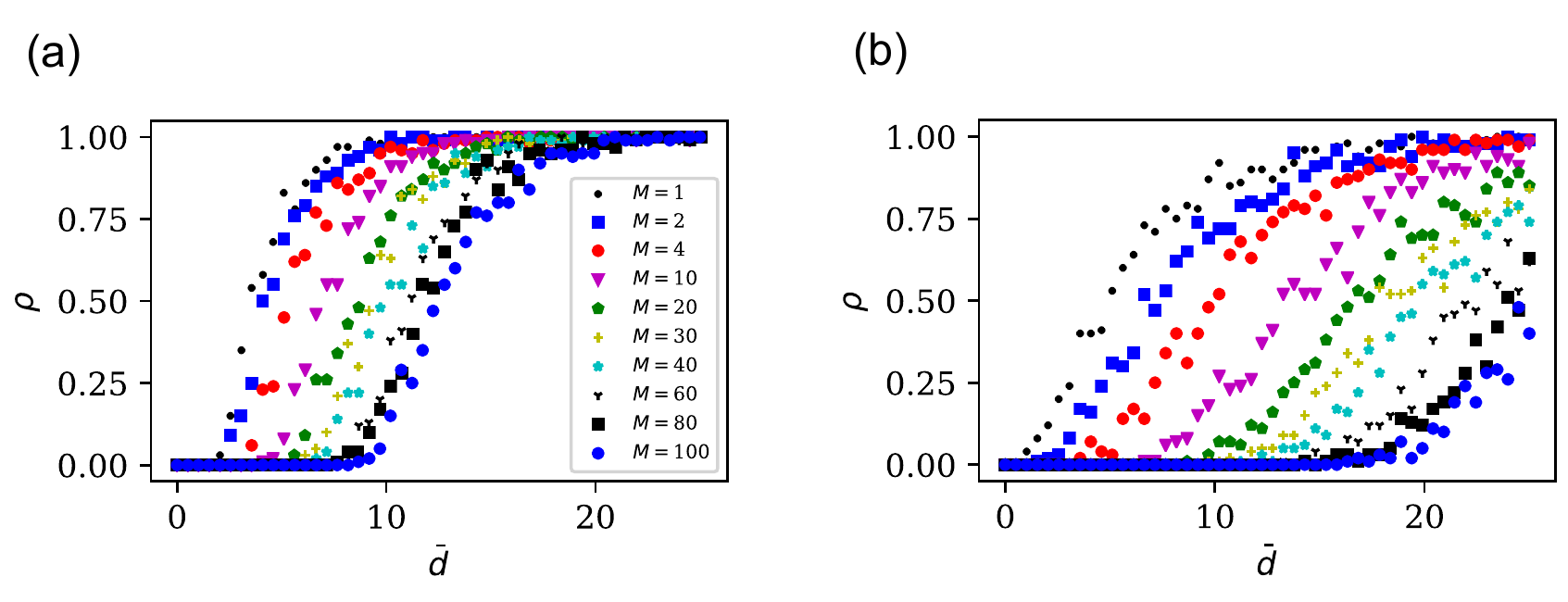}
			\caption{
				The structural invertibility of (a) Erd\H{o}s-R\'enyi random 
				graphs and (b) scale free networks
				(power law exponent 
				$\gamma = 2.4$) 
				depends on the average node degree 
				$\bar{d}$. 
				For each data point, an ensemble of 
				$100$
				networks with 
				$N=10^3$ 
				nodes was simulated and disjoint sets of 
				$M=P$ 
				input and output nodes were chosen by uniform random 
				sampling. Each network was tested for structural 
				invertibility and the fraction 
				$\rho$ 
				of invertible networks in the ensemble was recorded. For 
				large networks with many nodes 
				$N \to \infty$ 
				and fixed average degree 
				$\bar{d}$, 
				the function 
				$\rho(\bar{d},N, M)$ 
				asymptotically approaches 
				$\rho(\bar{d}, M)$. 
				We found empirically that 
				$N=10^3$ 
				is a good approximation for this asymptotic regime.
				}
			\label{fig:Fig6}
		\end{figure*}
		Intuitively, a densely connected network allows to find many 
		node-disjoint paths connecting the input node set 
		$S$ 
		to the output node set 
		$Z$. 
		Thus, for a set of randomly selected input nodes 
		$S$ 
		and a disjoint set of randomly selected output nodes 
		$Z$, 
		the chance for invertibility should increase  with the average 
		degree 
		$\bar{d}$ 
		of the network 
		$g$.  

		To test this hypothesis, we simulated graphs with 
		$N=10^3$ 
		nodes using either Erd\H{o}s-R\'enyi random graphs
		\cite{
			erdhos_evolution_1960}
		or scale-free networks 
		\cite{
			barabasi_emergence_1999, 
			goh_universal_2001} 
		with varying average degree 
		$\bar{d}$. 
		Throughout these simulations, we used 
		$M=P$, 
		i.e. the same number of input and output nodes. For a given graph 
		$g$, 
		we first sampled a set 
		$S$ 
		of 
		$M$ 
		input nodes uniformly at random and then randomly sampled the set 
		$Z$ 
		of 
		$P$ 
		output nodes from the remaining nodes, such that the input and output 
		node sets are disjoint: 
		$S \cap Z = \emptyset$. 
		We will refer to this sampling scheme for inputs and outputs as 
		uniform random sampling, which simulates scenario~SC~\Romannum{1}. 
		Then, we used the structural invertibility algorithm to check whether 
		the resulting network represents the influence graph of an invertible 
		or a non-invertible system~(Eq.~\eqref{eq:lin_io}).~To estimate the 
		probability 
		$\rho=\rho(\bar{d},N, M)$, 
		that a graph with $N$ nodes, 
		$M=P$ 
		inputs and outputs and average degree 
		$\bar{d}$ is invertible under this random scheme, we sampled 
		$100$
		triples of 
		$(S,g,Z)$ 
		of different graphs and input/output node sets and counted the 
		relative frequency of structurally invertible systems represented by 
		these graphs.  

		As can be seen in Fig.~\ref{fig:Fig6}(a), the probability of 
		invertibility for Erd\H{o}s-R\'enyi random graphs increases indeed 
		monotonously with the average degree 
		$\bar{d}$. 
		For small 
		$\bar{d}$, 
		almost no graph is invertible, whereas for large 
		$\bar{d}$ almost all graphs are invertible.~These two regimes are 
		separated by a transition zone, where some networks are invertible 
		and others are not. In this transition zone, invertibility depends on 
		the specific characteristics of the random graph and the average 
		degree is not sufficient to decide  about invertibility. For more 
		inputs and outputs (increasing 
		$M=P$), 
		the transition zone moves towards higher 
		$\bar{d}$. 
		This is plausible, because a family of 
		$M=P$ 
		node disjoint paths 
		$\Pi$ 
		connecting the input and the output sets 
		$S$ 
		and 
		$Z$ 
		cannot be found in sparse networks with a small overall number of 
		paths. We found empirically, that for 
		$M=P$ 
		the function 
		$\rho=\rho(\bar{d},N, M)$ 
		attains an asymptotic limit 
		$\rho(\bar{d}, M)$ 
		for large networks with a given average 
		degree ($N \to \infty$
		and 
		$\bar{d}$ 
		fixed).  

		Scale-free networks offer another network topology induced by a power 
		law degree distribution 
		$P(k)\propto k^{-\gamma}$
		that has been observed to be the underlying structure of 
		many real networks 
		\citep{
			barabasi_emergence_1999, 
			goh_universal_2001}.~Scale-free 
		networks have a tendency for a few highly connected 
		\textit{hubs} and many weakly connected \textit{satellites}.~The 
		effect of this heterogeneity is not immediately obvious: On the one 
		hand, the hubs act as bottlenecks, that shrink the chance of finding 
		node-disjoint paths. On the other hand, the diameter of scale-free 
		networks is much smaller than the diameter of Erd\H{o}s-R\'enyi 
		random graphs 
		\cite{
			cohen_scale-free_2003};
		hence paths are 
		shorter and might possibly find their way to the output set 
		$Z$, 
		before they intersect. 
		
		The \texttt{python networkx} package was used to do the simulations. For the 
		Erd\H{o}s-R\'enyi graphs $g=g(N,p)$ we used \texttt{fast\_gnp\_random\_graph}. 
		We implemented  the static model from 
		\cite{
			goh_universal_2001}
		to generate scale-free graphs $g=g(\bar{d},N,\gamma)$.
		Here, $N$ is the node number, $p$ the probability, that an edge in the 
		Erd\H{o}s-R\'enyi graph exists, $\bar{d}$ is the average degree
		and $\gamma$ the exponent 
		in the power law degree distribution. As before we drew 
		$100$ 
		graphs, distributed 
		$M=P$ 
		input and output nodes uniformly over each graph, and took 
		$\rho$ 
		as the fraction of structural invertible graphs. For a given number 
		of inputs and outputs, the transition zone  for scale free networks 
		(see~Fig.~\ref{fig:Fig6}(b)) is broader in comparison to 
		Erd\H{o}s-R\'enyi systems. In scale free networks, increasing the 
		number of inputs and outputs has a more drastic effect on diminishing 
		the chance for invertibility, as can be seen from the larger gaps 
		between the different curves in~Fig.~\ref{fig:Fig6}(b) compared 
		to Fig.~\ref{fig:Fig6}(a). For the same average degree 
		$\bar{d}$, 
		one is less likely to sample an invertible combination of inputs and 
		outputs in a scale free network than in a homogenous 
		Erd\H{o}s-R\'enyi random graph. Thus, under the uniform random 
		placement scheme (scenario~SC~\Romannum{1}) of inputs and outputs, 
		hubs are typically detrimental for invertibility. 

	\subsection{The role of the degree distribution}
		\begin{figure*}
			\centering
			\includegraphics[width = 2\columnwidth]{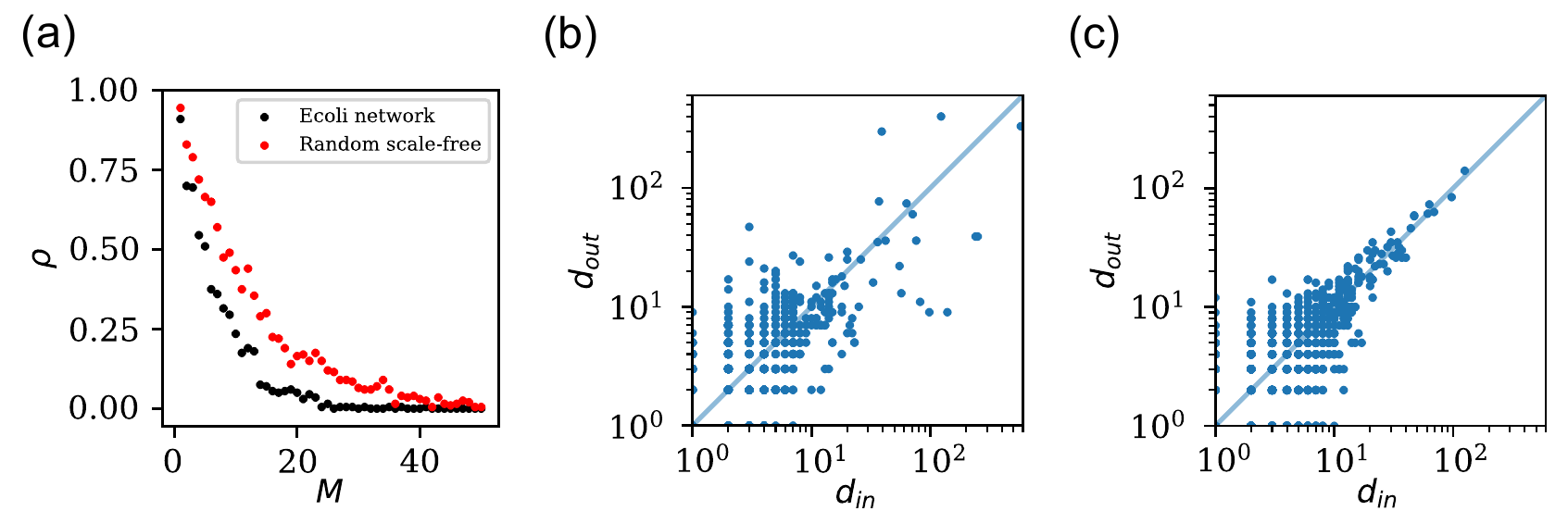}
			\caption{
				Effect of the degree distribution on invertibility under the 
				uniform random scheme. (a) The fraction of structural 
				invertible input-output configurations for the 
				\textit{E.coli} metabolic network (black dots) is compared to 
				an ensemble of scale free random networks with the same power 
				law exponent 
				$\gamma = 2.61$ 
				and the same average degree 
				$\bar{d}= 11.17$. 
				(b,c) The out-degree versus the in-degree for the 
				\textit{E.coli} metabolic network (b) and a typical scale 
				free network (c).
				}
			\label{fig:Fig7}
		\end{figure*}
		To explore the effect of the degree distribution on invertibility, we 
		compared the scale free \textit{E.coli} metabolic network
		\cite{
			schellenberger_bigg:_2010}
		to an ensemble of simulated scale free networks. The \textit{E.coli} 
		metabolic network has an estimated power law exponent of 
		$\gamma=2.61$ 
		and an average degree of 
		$\bar{d}=11.17$. 
		We used the static model and the same parameters to simulate the 
		ensemble of 100 scale free networks. We selected 100 input and output 
		node sets for the \textit{E.coli} network using uniform random 
		sampling (scenario~SC~\Romannum{1}) and computed the fraction 
		$\rho$ 
		of invertible systems as a function of the number 
		$M=P$ 
		of in- and outputs.~The uniform random sampling scheme was also 
		applied to each of the 100 simulated scale free graphs. Intriguingly, 
		we found that  the probability for invertibility 
		$\rho$ 
		is higher in the simulated networks than in the \textit{E.coli} 
		metabolic network, see Fig.~\ref{fig:Fig7}(a). In addition, we 
		performed a degree-preserving randomization (rand-Degree) 
		\cite{
			maslov_specificity_2002}
		to all networks (\textit{E.coli} and simulated) and found that this 
		doesn't change 
		$\rho$, 
		up to small sampling deviations (see next 
		Subsec.~\ref{subsec:real1}). In this degree-preserving 
		randomization, the in-degree 
		$d_\text{in}$ 
		(number of incoming edges) and the out-degree 
		$d_\text{out}$ 
		(number of outgoing edges) of each node is preserved, but the nodes 
		which link to each other are randomly selected. 

		In Fig.~\ref{fig:Fig7}(b,c) we have plotted 
		$d_\text{out}$ 
		versus 
		$d_\text{in}$ 
		for the \textit{E.coli} metabolic network (b) and a typical simulated 
		scale free network (c). It can be seen, that the \textit{E.coli} 
		network has many more high degree nodes with a large difference 
		$d_\text{out}-d_\text{in}$ 
		between out- and in-degree. This asymmetry is by construction much 
		smaller in the simulated networks. These results indicate, that the 
		joint distribution of in- and out-degrees 
		$P(d_{in}, d_{out})$ 
		largely determines the probability of finding an invertible system 
		under the uniform input-output sampling scheme 
		(scenario~SC~\Romannum{1}). 
		\begin{figure}
			\centering
			\includegraphics[width = \columnwidth]{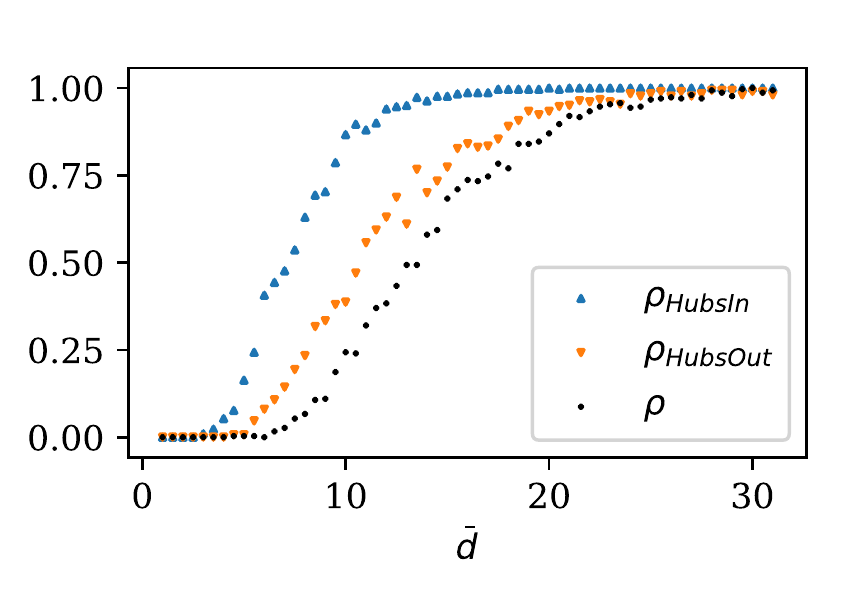}
			\caption{
				Preferential sampling of hubs as input nodes (blue) or output 
				nodes (orange) for an ensemble of 
				$300$
				scale free networks 
				($N=500$, 
				$M=P=10$, 
				$\gamma=2.4$). 
				The blue triangles show the fraction of invertible networks, 
				when the 
				$M=10$ 
				nodes with highest degree are chosen as input nodes and the 
				$P=10$ 
				output nodes are sampled uniformly from the remaining 
				$N-M=490$ 
				nodes. Conversely, for the orange symbols, the 
				$P=10$ 
				nodes with highest degree were chosen as outputs and the 
				inputs were uniformly sampled from the remaining 
				$N-P=490$ 
				nodes. The black dots were obtained for the same uniform 
				random scheme as in Fig.~\ref{fig:Fig6}, were both inputs and 
				outputs were sampled uniformly~(scenario SC~\Romannum{1}).}
			\label{fig:Fig8}
		\end{figure}

		To further explore the role of hubs in networks with a more symmetric 
		assignment of in- and output nodes, we modified the uniform random 
		scheme. Instead of uniform sampling (see \ref{subsec:unif}), we now 
		ranked all the state nodes according to their degree and selected the 
		$M$ 
		nodes with the highest degree as input set 
		$S$. 
		The 
		$P=M$ 
		output nodes 
		$Z$ 
		were again uniformly sampled from the remaining nodes. As can be seen 
		from Fig.~\ref{fig:Fig8}, this preferred selection of hub nodes as 
		inputs can drastically increase the probability of invertibility in 
		scale free networks. A less drastic improvement can also be observed, 
		when the high degree nodes are used as outputs and then the input 
		nodes are uniformly sampled. 

	\subsection{Invertibility of real networks under uniform sampling of inputs and outputs (scenario~SC~\Romannum{1})}\label{subsec:real1}
		\begin{figure*}
			\centering
			\includegraphics[width = 2\columnwidth]{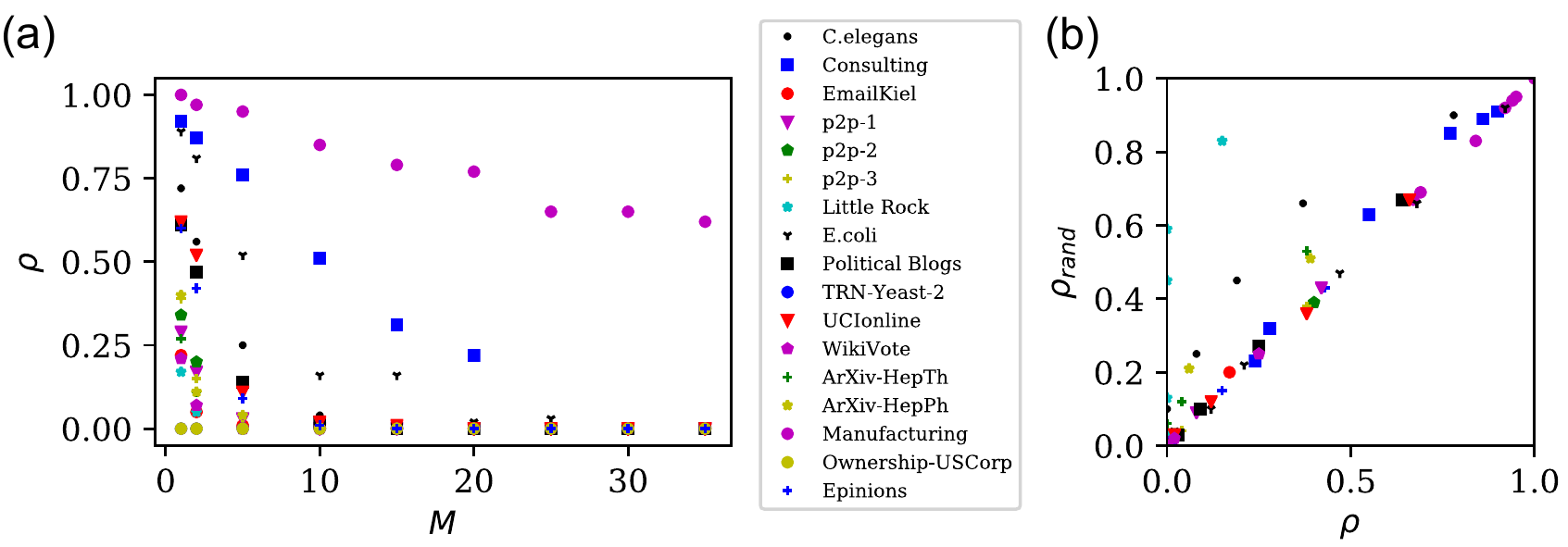}
			\caption{
				Structural invertibility of real networks. (a) The 
				probability of invertibility under the random scheme. 	
				For each network in Table~\ref{tab:Real_Networks}, 
				the disjoint sets of 
				$M=P$ 
				in- and ouput nodes were obtained by uniform random sampling 
				(scenario SC~\Romannum{1}). This was repeated 
				$100$ 
				times for each 
				$M$ 
				and the fraction 
				$\rho$ 
				of structural invertible graphs was estimated. (b) Comparison 
				of 
				$\rho$ 
				from (a) to the probability of invertibility after a 
				degree-preserving randomization 
				($\rho_\text{rand}$). Note that some of the symbols are exactly
				masking each other.
				In this randomisation, the in- and out-degree of each node 
				is preserved and the nodes linked to each other are randomly 
				selected
				\cite{
					maslov_specificity_2002}.
				}
			\label{fig:Fig9}
		\end{figure*}	
		\begin{table*}[t]
			\centering
			\resizebox{2\columnwidth}{!}{%
			\begin{tabular}{llllllll}
			\hline
			\hline
			Name& $N$ & $\vert \mathcal{E}\vert$ & $\bar{d}$ & Scalefree & 
			$\gamma$ & Brief Description & Database 
			\\
			\hline
			\multicolumn{8}{|l|}{\textit{Regulatory}} 
			\\
			\hline
			TRN-Yeast-2 
		\cite{milo_network_2002} 
			& 688 & 1079 & 3.14 & True 
			& 2.29 & Transcriptional regulatory network of S.cerevisiae & Uri 
			Alon Lab 
		\cite{noauthor_collection_nodate} 
			\\
			Ownership-USCorp 
		\cite{norlen_eva:_2002} 
			& 7253 & 6726 & 1.85 &
			True & 2.45 & Ownership network of US corporations & Pajek 
		\cite{batagelj_pajek_nodate} 
			\\
			\hline
			\multicolumn{8}{|l|}{\textit{Trust}} 
			\\
			\hline
			WikiVote 
		\cite{leskovec_community_2009} 
			& 7115 & 103689 & 29.13 & False & & Who-vote-whom network of 
			Wikipedia users & snap Stanford 
		\cite{leskovec_snap_2014}
			\\
			Epinions 
		\cite{richardson_trust_2003} 
			& 75888 & 508837 & 13.41 & True & 1.73 & Who-trust-whom network 
			of Epinions.com users & snap Stanford 
		\cite{leskovec_snap_2014} 
			\\
			\hline
			\multicolumn{8}{|l|}{\textit{Food Web}} \\
			\hline
			Little Rock 
		\cite{martinez_artifacts_1991} 
			& 183 & 2494 & 27.26 & False & & Food Web in Little Rock lake & 
			Mount Sinai 
		\cite{noauthor_mount_nodate} 
			\\
			\hline
			\multicolumn{8}{|l|}{\textit{Metabolic}} \\
			\hline
			E.coli 
		\cite{schellenberger_bigg:_2010} 
			& 1039 & 5802 & 11.17 & True & 2.61 & Network of the metabolic 
			reactions of the E. coli bacteria & BiGG 
		\cite{noauthor_bigg_nodate}
			\\
			\hline
			\multicolumn{8}{|l|}{\textit{Neuronal}} \\
			\hline
			C.elegans 
		\cite{watts_collective_1998} & 297 & 2345 & 15.79 & True & 2.15 & 
			Neural network of C.elegans & Network Repository 	
		\cite{nr}
			\\
			\hline
			\multicolumn{8}{|l|}{\textit{Citation}} \\
			\hline
			ArXiv-HepTh 
		\cite{leskovec_graphs_2005}
			& 27770 & 352807 & 25.41 & False & & Citation networks in HEP-TH 
			category of Arxiv & snap Stanford
		\cite{leskovec_snap_2014}
			\\
			ArXiv-HepPh 
		\cite{leskovec_graphs_2005} 
			& 34546 & 421578 & 24.41 & False & &Citation networks in HEP-PH 
			category of Arxiv & snap Stanford 
		\cite{leskovec_snap_2014}
			\\
			\hline
			\multicolumn{8}{|l|}{\textit{WWW}} \\
			\hline
			Political blogs 
		\cite{adamic_political_2005} 
			& 1224 & 19025 & 31.09 & True & 1.04 & Hyperlinks between weblogs 
			on US politics & Moreno 
		\cite{noauthor_datasets_nodate}
			\\
			\hline
			\multicolumn{8}{|l|}{\textit{Internet}} \\
			\hline
			p2p-1 
		\cite{leskovec_graph_2007} 
			& 10876 & 39994 & 7.36 & False & & Gnutella peer-to-peer file 
			sharing network & snap Stanford 
		\cite{leskovec_snap_2014} 
			\\
			p2p-2
		\cite{leskovec_graph_2007} 
			& 8846 & 31839 & 7.2 & False & & Same as above (at 
			different time) & snap Stanford 
		\cite{leskovec_snap_2014} 
			\\
			p2p-3 
		\cite{leskovec_graph_2007} 
			& 8717 & 31525 & 7.2 & False& & Same as above (at different time) 
			& snap Stanford 
		\cite{leskovec_snap_2014} 
			\\
			\hline
			\multicolumn{8}{|l|}{\textit{Social Communication}} \\
			\hline
			UCIonline 
		\cite{opsahl_clustering_2009} 
			& 1899 & 20296 & 21.38 & True & 1.33 & Online message network of 
			students at UC, Irvine & Opsahl 
		\cite{noauthor_datasets_2009} 
			\\
			EmailKiel
		\cite{ebel_scale-free_2002}
		 	& 57194 & 103731 & 3.63 & True & 1.77 & Email network of traffic 	
		 	data collected at University of Kiel, Germany & Barabasi 
		 \cite{liu_observability_2013} 
		 	\\
			\hline
			\multicolumn{8}{|l|}{\textit{Intraorganizational}} \\
			\hline
			Manufacturing 
		\cite{cross_hidden_2004} 
			& 77 & 2228 & 3.14 & False & & Social network from a 
			manufacturing company & Opsahl 
		\cite{noauthor_datasets_2009}
			\\
			Consulting 
		\cite{cross_hidden_2004}
			& 46 & 879 & 38.22 & False & & Social network from a consulting 
			company & Opsahl 
		\cite{noauthor_datasets_2009}
			\\
			\hline
			\hline
		\end{tabular}%
		}
		\caption{
			A compilation of networks from various fields, also examined by 
			other authors
			(\cite{
				liu_observability_2013}).
			Here 
			$N$ 
			is the number of nodes and 
			$\vert \mathcal{E} \vert$ 
			the number of edges. The column ``Scale free'' indicates whether 
			the degree distribution shows a power law and if so, the 
			power law exponent 
			$\gamma$ 
			was computed.
			}
			\label{tab:Real_Networks}
		\end{table*}

		In addition to the \textit{E.coli} metabolic network, we tested a 
		compilation of real networks (Table~\ref{tab:Real_Networks}) for 
		invertibility under the uniform input-output sampling scheme 
		(scenario~SC~\Romannum{1}). Again, we  estimated the 
		probability of invertibility 
		$\rho$ 
		as a function of the number of in- and output nodes 
		$M=P$, 
		see Fig.~\ref{fig:Fig9}(a). Here, we observe a ranking with the 
		(not scale-free) \textit{Intraorganizational} networks on top, with 
		the highest chance for structural invertibility, followed by the 
		(scale-free) biological \textit{E.coli} and \textit{C.elegans} 
		networks. Many of the remaining networks are much larger, with 
		$N > 10^5$ 
		nodes, and have a higher average degree. Nevertheless, the chance to 
		find a structural invertible in- and output configuration under the 
		uniform sampling scheme is vanishingly small already for 
		$M=P \approx 5$ 
		for many real networks in this compilation. Thus, in these networks, 
		it is typically difficult to reconstruct unknown inputs or model 
		errors, if the outputs nodes are chosen randomly and the inputs can 
		not be selected. These results are robust under a  degree-preserving 
		randomisation 
		\cite{
			maslov_specificity_2002},
		where the nodes linked to each other are randomly selected, but the 
		in-degree 
		$d_\text{in}$ 
		and out-degree 
		$d_\text{out}$ 
		of each node is preserved~(Fig.~\ref{fig:Fig9}(b)). 	

		To summarise, we find that invertibility under the uniform random 
		scheme (scenario~SC~\Romannum{1}) depends mainly on the joint distribution 
		of in- and out-degree. Dense and homogeneous networks tend to be 
		invertible, while sparse and scale-free networks provide a smaller 
		chance to reconstruct structural model errors and hidden inputs. As 
		emphasised by the results for real networks, more efficient ways to 
		select sensor nodes or inputs are required. The positive effects of 
		the preferential selection of hubs, either as inputs or outputs, hint 
		at possible ways to improve the chance for invertibility under 
		different scenarios, where only outputs (SC~\Romannum{2}) or both 
		input and output (SC~\Romannum{3}) nodes can deliberately be 
		selected.


\section{Sensor node placement for invertibility}\label{sec:sensor}

	Whilst the uniform random scheme (scenario~SC~\Romannum{1}) provides some 
	insights into the effects of network properties on invertibility, it is 
	not a very efficient strategy to randomly place the sensor nodes over the 
	network. 

	A second, more realistic scenario (SC~\Romannum{2}) is the following: 
	Assume, we have observed a systematic discrepancy between the output 
	measurements and the model and we want to infer the unknown inputs (or 
	model errors). Assume further, that the input node
	set 
	$S$ 
	is given, either from domain knowledge about the respective system or 
	from educated guessing about possible positions for input signals or 
	model errors. However, the system might not be invertible with the 
	current output node set. Typically, we know which states could in 
	principle be measured and we can define a maximum set 
	$Z_0$ 
	of potential sensor nodes. If the resulting system with the maximum 
	output set 
	$Z_0$ 
	is invertible, one can start the acquisition of time series data and feed 
	them into one of the algorithms 
	\cite{  kuhl_real-time_2011,
			schelker_comprehensive_2012,
			Fonod_boblin_unknown_2014, 
			engelhardt_learning_2016, 
			engelhardt_bayesian_2017, 
			chakrabarty_state_2017, 
			tsiantis_optimality_2018}.
	to infer the input. This approach, though straightforward, would 
	potentially be wasteful or even impractical. In domains like biology or 
	economics, measurements might in principle be possible, but costly or 
	take a great deal of time. Thus, a more feasible approach is to reduce 
	this excessive effort by selecting a minimum set of sensor nodes from the 
	maximum set 
	$Z_0$. 

	A similar sensor node placement problem for state 
	observability 
	\cite{
		boukhobza_state_2007, 
		liu_observability_2013}
	has been investigated before. In this section, we present a very simple 
	but efficient greedy algorithm to select a minimum set of sensor nodes 
	for invertibility for a given fixed set of input nodes 
	(scenario SC~\Romannum{2}). This algorithm can drastically improve the 
	chances for invertibility, as we will demonstrate by comparing to uniform 
	random sampling for scenario SC~\Romannum{1}. Finally, we will also 
	investigate a third scenario SC~\Romannum{3}, where the input nodes can 
	also be selected. 

	\subsection{Sensor node placement algorithm}

		Let us formalise the scenario SC~\Romannum{2} motivated above: The 
		influence graph 
		$g$ 
		for dynamic system~\eqref{eq:ss_model_lin} including a set 
		$S$ 
		of 
		$M$ 
		potential input nodes is assumed to be given. In addition, we have an 
		initial maximum set 
		\begin{equation}
			Z_0=\{z_1,\ldots,z_{P_0}\} \subseteq \{x_1,\ldots,x_N\}
		\end{equation}
		of 
		$P_0$ 
		potential output or sensor nodes. Thus, 
		$Z_0$ 
		incorporates all systems states which could potentially be monitored. 
		If the system with 
		$S$ 
		as given input set and 
		$Z_0$ 
		as maximum output set is not invertible, then there is no way to 
		reconstruct the inputs from the outputs. However, if invertibility is 
		ensured for 
		$Z_0$, then we want to reduce this maximum set to a smaller, potentially 
		minimal subset 
		$Z^*\subseteq Z_0$ 
		with 
		$P^*$ 
		outputs, which is still invertible, given the inputs 
		$S$. 

		From the structural invertibility theorem~(see 
		Subsec.~\ref{subsec:graphinv}) we know that the smallest output 
		set has  at least as many nodes as the input set. Thus, we will 
		always have 
		$P^* \ge M$. 
		For small sets 
		$Z_0$, 
		it might be possible to try all 
		$\binom{P_0}{M}$ 
		possible subsets of 
		$M$ 
		nodes from the maximum set 
		$Z_0$. 
		However, this brute force strategy becomes quickly infeasible, if 
		$P_0$ 
		is large. Reducing 
		$P_0$ 
		from the beginning is usually also not an option, since the maximum 
		sensor node set 
		$Z_0$ 
		needs to provide an invertible system, which might not be the case 
		for small sets. 

		A practical solution is given by a simple greedy algorithm, which 
		assumes that the triple 
		$(S,g,Z_0)$ 
		containing the maximum node set 
		$Z_0$ 
		is invertible. To initialise the algorithm, we assume that the nodes are in 
		some random order in 
		$Z_0$. 
		In the first iteration, we select the first node 
		$z$
		from 
		$Z_0$
		and try to delete it, but only if 
		$(S,g,Z_0 \setminus z)$ 
		with 
		$P_0-1$ 
		sensor nodes is still invertible. If not, we keep 
		$z$ 
		in the node set 
		$Z_1 := Z_0$, 
		i.e. we reject the deletion of this node. Otherwise we delete the 
		node by setting 
		$Z_1:=Z_0\setminus z$.
		In any case, we continue and try to delete a different node, say 
		the next node in 
		$Z_1$. 
		We proceed in this way until we have a sensor node set 
		$Z_k$ with $P_k = M$ output nodes and set $Z^* = Z_k$. 
		This algorithm takes at most 
		$P_0-M$ 
		steps. Note, that the greedy algorithm will always find a minimum 
		node set with the minimum number 
		$P^*=M$ 
		of sensor nodes, provided 
		$(S,g,Z_0)$ 
		with the initial node set 
		$Z_0$ 
		is invertible.   

		In Fig.~\ref{fig:Fig10} we present an example for a network with 
		$N=1000$ 
		nodes and 
		$M=100$ 
		uniformly sampled input nodes. All other nodes were included in the 
		maximum output set 
		$Z_0$, i.e.~$P_0=900$. 
		The algorithm takes $598$ iterations to find a minimum node set 
		$Z^*$ 
		with 
		$P^*=100$.~The 
		number of iterations can be reduced by replacing the random 
		removal of output nodes by a more selective satellite-deletion 
		strategy. By ranking the nodes in 
		$Z_0$ 
		according to their degree and selectively removing nodes with low 
		degree shrinks the (invertible) output set much faster than random 
		deletion. Reversing the order of the ranking, i.e. trying to 
		selectively delete hubs from the set of sensor nodes results in more 
		rejections and thus more iterations (Fig.~\ref{fig:Fig10}).~This is 
		consistent with the results from (Fig.~\ref{fig:Fig8}), were we found 
		that preferential selection of high degree output nodes improves the 
		chance for invertibility. Please note, that the greedy algorithm is 
		usually fast enough for most purposes, even without ranking the 
		nodes. This analyses merely serves to better understand the role of 
		hubs as inputs or outputs. 
		\begin{figure}
			\centering
			\includegraphics[width=\columnwidth]{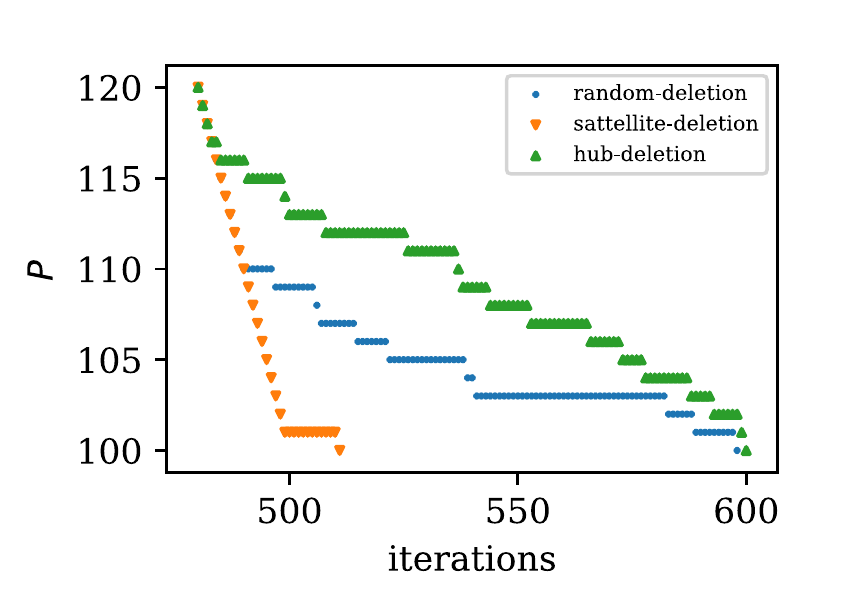}
			\caption{
				The greedy algorithm for optimal sensor node selection for 
				invertibility applied to a network with 
				$N=1000$ 
				state nodes, and 
				$M=100$ 
				randomly distributed inputs. From the remaining 
				$900$ 
				nodes, the maximum output set 
				$Z_0$ with 
				$P_0=700$ 
				potential sensor nodes was randomly chosen. The plot compares 
				the number of sensor nodes 
				$P$ 
				versus the number of iterations of the greedy algorithm for 
				random-, satellite-, and hub-deletion strategies. 
				}
			\label{fig:Fig10}
		\end{figure}

	\subsection{Application to real networks}
		The sensor node placement algorithm can only be successful, if the 
		maximum sensor node set 
		$Z_0$ 
		(together with the given input node 
		set) yields an invertible system. Larger 
		$Z_0$, 
		i.e.~a larger number 
		$P_0$ 
		of potentially measurable outputs, will obviously increase the 
		chances for invertibility and thus also the chance to find a minimum 
		sensor node set 
		$Z^*$ 
		of cardinality 
		$P^*=M$. 
		Apart from directly measuring all possible nodes, the largest 
		possible nontrivial sensor node set 
		$Z_0$ 
		is given by the 
		$N-M$ 
		nodes which are not input nodes. We used this 
		$Z_0$ 
		with 
		$P_0=N-M$ 
		to check the compendium of real networks 
		(Table~\ref{tab:Real_Networks}) for invertibility. 
		\begin{figure*}
			\centering
			\includegraphics[width = 2\columnwidth]{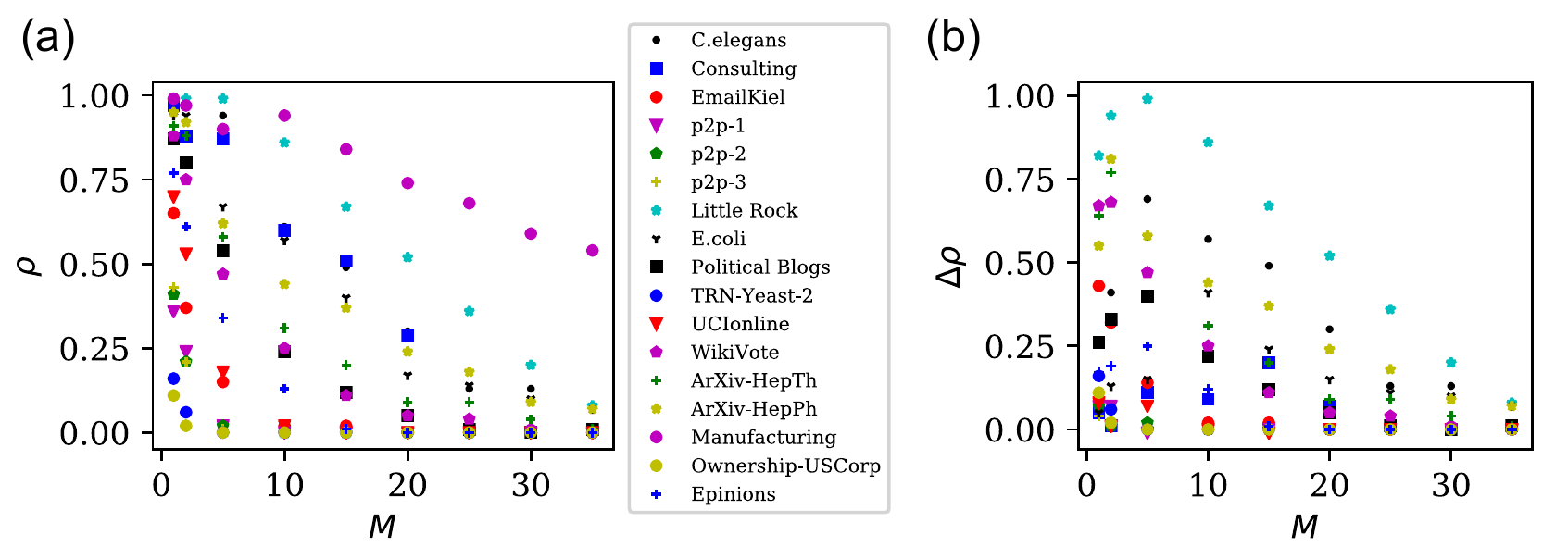}
			\caption{	
				The sensor node placement algorithm can increase the 
				probability of invertibility 
				$\rho$, 
				if the input node set is given (scenario SC~\Romannum{2}). 
				(a) The input node set was again uniformly sampled. All other 
				nodes were considered as potential maximum output node set 
				$Z_0$. 
				In case of invertibility, the sensor node placement 
				algorithm can reduce 
				$Z_0$ 
				to a minimum sensor node set 
				$Z^*$ 
				with 
				$P^*=M$ 
				outputs. (b) The difference 
				$\Delta \rho$ 
				between the probability of invertibility 
				$\rho$ 
				in (a) and the uniform random selection scheme used in 
				Fig.~\ref{fig:Fig9}(a) to highlight the improvement.
				}
			\label{fig:Fig11}
		\end{figure*}

		For the results in Fig.~\ref{fig:Fig11}(a), we sampled 
		$M$ 
		input nodes 
		$S$ 
		uniformly from all nodes  and then selected 
		$Z_0$ 
		as the remaining 
		$N-M$ 
		nodes~(scenario SC~\Romannum{1} with the largest possible 
		$Z_0$). 
		We repeated this over 100 randomly sampled input sets 
		$S$ 
		and estimated the fraction 
		$\rho$ 
		of invertible systems $(S,g,Z_0)$. Thus,
		$\rho$ provides an estimate for the probability to obtain 
		a structurally invertible system (for a given graph $g$) under scenario SC~\Romannum{2},
		where the $M$ input nodes $S$ are given and cannot be chosen, but
		all the other nodes can in principle be measured. This is then
		identical to the fraction of systems, where the sensor node 
		algorithm can reduce this initial sensor node set $Z_0$ to 
		a minimum set $Z^*$ with $P^*=M$ outputs. By comparing Fig.~\ref{fig:Fig11}(a) with 
		Fig.~\ref{fig:Fig9}(a) we can observe, that this strategy improves 
		(in some cases drastically)  the chances to find an invertible 
		system. For better visibility see also Fig.~\ref{fig:Fig11}(b), where 
		we have plotted the difference 
		$\Delta \rho$ 
		between optimal sensor node placement in Fig.~\ref{fig:Fig11}(a) and 
		uniform output sampling Fig.~\ref{fig:Fig9}(a). 

	\subsection{Input node selection}
		So far we have assumed the input node set 
		$S$ 
		as given. To mimick this frequent situation that the input nodes can 
		not deliberately be selected, we performed uniform random sampling of 
		the input nodes. However, there might be situations
		where we can influence the selection of inputs 
		(scenario SC~\Romannum{3}). For the design of communication networks, 
		the ability to uniquely distinguish different input signals is 
		clearly a requirement and input node  sets are often deliberately 
		chosen~\cite{larsson_design_2014}. Another example is given by the modular approach to model 
		building, where one aims to describe a subsystem (or module) by a 
		system of ODEs~\cite{raue_lessons_2013, almog_is_2016}. This module will by definition receive 
		inputs from the environment (see Fig.~\ref{fig:Fig1}), which might not be 
		directly measurable. Then, invertibility to infer these inputs from 
		outputs is clearly an important requirement, which might influence 
		decisions about the right state variables to include in the 
		subsystem.

		Based on the results of Fig.~\ref{fig:Fig8} we hypothesised that hubs 
		with a high node degree are good candidates for inputs promoting 
		invertibility. To test this hypothesis, we used again the networks 
		listed in Table~\ref{tab:Real_Networks}. For each network 
		$g$, 
		we selected the 
		$M$ 
		nodes with highest out-degree as input node set 
		$S$. 
		As before, we used the remaining 
		$N-M$ 
		nodes as maximum sensor node set 
		$Z_0$. 
		If 
		$(S,g,Z_0)$ 
		is invertible, the sensor node selection algorithm can always reduce 
		this to 
		$(S,g,Z^*)$ 
		with a minimum node set 
		$Z^*$ 
		having only 
		$P^*=M$ 
		outputs. Starting from 
		$M=1$ 
		we increased the number of input nodes for each network one by one as 
		long as the corresponding system 
		$(S,g,Z_0)$ 
		was invertible. The maximum number 
		$M_\text{max}$ 
		of inputs for each network  is shown in Fig.~\ref{fig:Fig12}, 
		together with the probability of invertibility for the other 
		scenarios using the uniform random scheme (data from 
		Fig.~\ref{fig:Fig9}(a)) and the sensor node placement algorithm~(data 
		from Fig.~\ref{fig:Fig11}(a)). Please note, that for this hub input 
		selection strategy all systems with 
		$M<M_\text{max}$ 
		inputs are invertible. Clearly, the hub input selection strategy 
		provides a way to increase the number of input signals which can 
		still reconstructed in these networks.
		\begin{figure*}
			\centering
			\includegraphics[width=2\columnwidth]{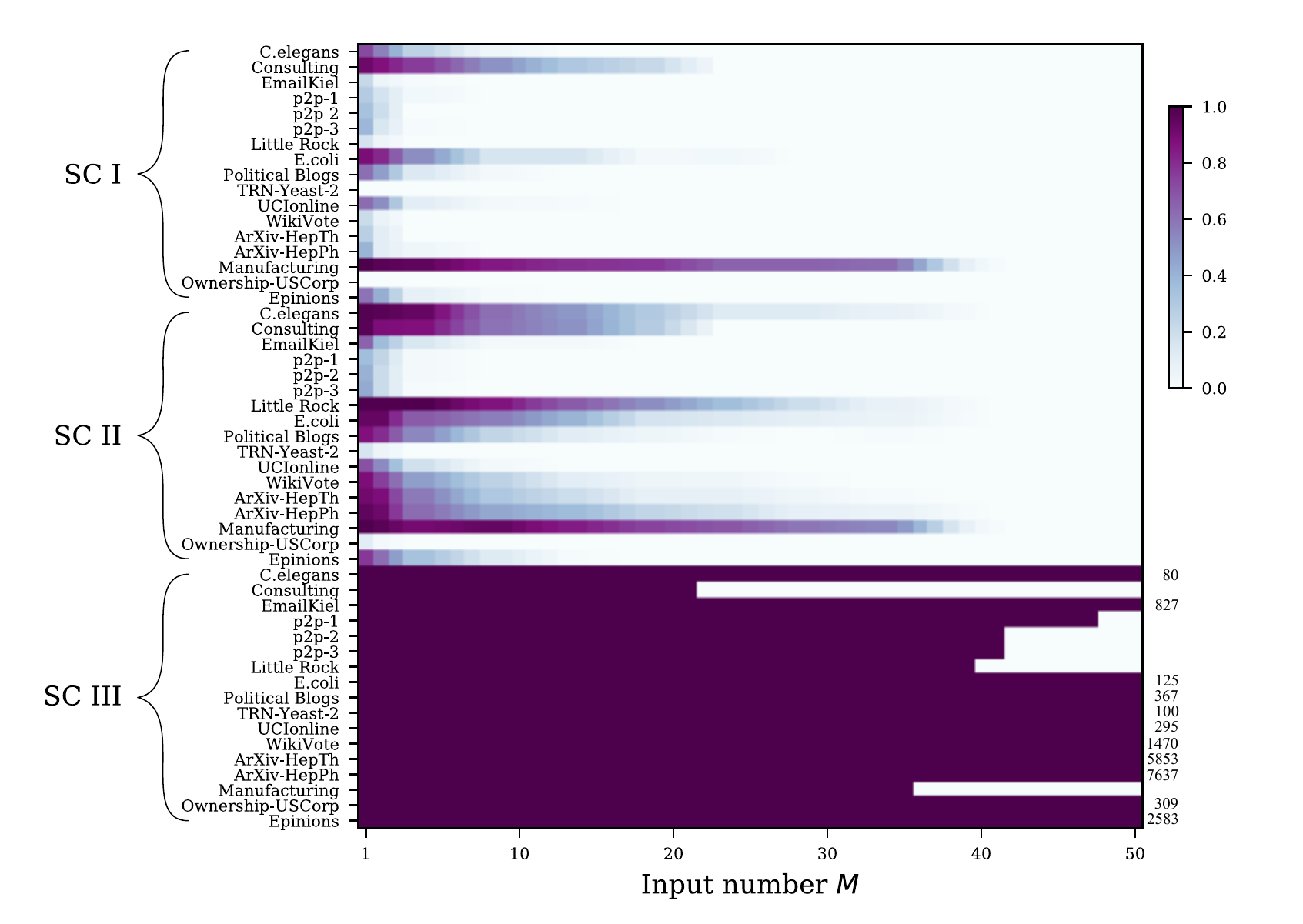}
			\caption{
				A heatmap comparing the probability of invertibility 
				$\rho$ 
				for the networks in Table~\ref{tab:Real_Networks} with three 
				different node selection schemes as a function of the number 
				of inputs. Scenario~SC~\Romannum{1}, where neither inputs nor 
				outputs can be selected, was simulated by uniform random 
				sampling of input and output nodes  (same data as in 
				Fig.~\ref{fig:Fig9}(a)). In scenario~SC~\Romannum{2} we are 
				able to select the output nodes and the results for the 
				optimal sensor node placement from Fig.~\ref{fig:Fig11}(a) 
				are included for comparison. 
				Under scenario scenario~SC~\Romannum{3}, we can select both, 
				input and output nodes. We used hub input selection (selection of nodes
				with highest out-degree) in 
				combination with optimal sensor node placement and found that 
				$\rho$ 
				is either one or zero. The numbers on the right indicate the 
				maximum number 
				$M_\text{max}$ 
				of inputs which still provide an invertible system. 
				}		
			\label{fig:Fig12}
		\end{figure*}

\section{Summary and open questions}\label{sec:conclusion}
\subsection{Summary and significance of the results}
		Reconstructing unknown inputs from outputs of open systems is useful in 
	many settings. For modellers, the inputs provide important information 
	about model errors and cues for model improvement or extension
	\cite{
		mook_minimum_1987, 
		engelhardt_learning_2016, 
		engelhardt_bayesian_2017}.
	In biomedical systems, the unknown inputs can represent unmodelled 
	environmental or physiological inputs, which might be interesting for the 
	design of devices or measurement strategies.~In electrical or secure 
	networks, the unknown inputs could be attack signals, which need to be 
	reconstructed and then mitigated. Unknown inputs can also be useful for 
	improved state estimation 
	\cite{
		engelhardt_learning_2016, 
		engelhardt_bayesian_2017, 
		chakrabarty_state_2017}
	and data assimilation 
	\cite{
		abarbanel_predicting_2013, 
		reich_probabilistic_2015}.
	Thus, from the viewpoint of modellers and engineers, invertibility is a 
	desirable property for open systems. In this work, we have 
	focused on structural invertibility, which has the two advantages of (i) 
	only requiring topological network information and  (ii) being testable 
	even for large networks using the structural controllability algorithm. 
	
	Although invertibility is desirable from an applied and analytic 
	perspective, our results for an uniform random input selection scheme 
	indicate that invertibility cannot be taken for granted, especially not 
	in networks with low average degree, many inputs and with a scale free 
	degree distribution. Thus, under the scenario SC~\Romannum{1}, were 
	neither inputs nor outputs can deliberately be chosen, many real networks 
	have a disposition to mask differences between different input signals. 
	It is well known that for example some dynamic biological systems respond 
	often identically or similarly to a variety of different stimuli. Thus, 
	living dynamic systems often  distinguish different patterns rather than 
	small differences in the inputs. In addition, real systems need to have a 
	certain robustness against small perturbations and noise. Thus, 
	invertibility is possibly not always a desirable property for the 
	specific tasks to be performed by the network. Further research is needed 
	to investigate tradeoffs between invertibility and other network traits, 
	like e.g. controllability or robustness.  Nevertheless, non-invertibility 
	poses a challenge for experimentalists and modellers to reconstruct 
	structural model errors and inputs from the environment. 

	We approached this problem by deriving an efficient sensor node placement 
	algorithm, which extracts a minimum set of measurement nodes required for 
	invertibility of a given network with a given input set (scenario 
	SC~\Romannum{2}). In this scenario, the sensor node placement algorithm 
	facilitates optimal experimental design for the reconstruction of inputs 
	from outputs. As such, it can be used in conjunction with input 
	reconstruction algorithms
	\cite{kuhl_real-time_2011,
		schelker_comprehensive_2012, 
		Fonod_boblin_unknown_2014,
		engelhardt_learning_2016, 
		engelhardt_bayesian_2017}
	and input observers
	\cite{
		chakrabarty_state_2017, 
		tsiantis_optimality_2018}.
	Structural invertibility provides a necessary condition for these algorithms to work.
		
	In a third scenario SC~\Romannum{3} we assumed that both inputs and 
	outputs can be selected. We found that selecting nodes with a high 
	out-degree as input nodes, in combination with optimal output selection using 
	the sensor node placement algorithm, drastically increases the number of 
	inputs which can still be reconstructed from output measurements. 
	Intuitively, these input hubs distribute the input signal widely over 
	the network, therefore increasing the likelihood for finding node 
	disjoint paths linking these inputs to the outputs. Although scenario 
	SC~\Romannum{3}, where input nodes can deliberately be selected, might 
	not always be realistic, it can be useful for the design of dynamic 
	mathematical models or for the design of synthetic systems. For example, 
	a key goal of Synthetic Biology is to engineer new biological systems for 
	desired functionalities. In general, these systems are embedded in larger 
	systems and will receive inputs from their environment, which should be 
	inferable from measurements. In this case, optimal input selection in 
	conjunction with optimal sensor node placement can provide important 
	benefits. Another example is modular modelling, were an interesting 
	subsystem embedded in a larger system is modelled in detail~\cite{raue_lessons_2013, almog_is_2016}. To detect 
	both  potential model error or genuine inputs from the environment to the 
	model, invertibility is essential. To achieve invertibility, it might be 
	useful to include additional states which otherwise would not be deemed 
	to be essential to understand the modular subsystem. 
	
	\subsection{Limitations and open questions}
	Purely structural approaches to controllability and observability have been criticised 
	to sometimes provide suboptimal conclusions for real systems. Depending on the 
	quantitative properties of the interactions between the state nodes, a system 
	might be  practically uncontrollable or unobservable, 
	even if the structural criteria are fulfilled
	\cite{ 	krener_meas_2009,
			cornelius_controlling_2011, 
			sun_controllability_2013, 
			cornelius_realistic_2013, 
			yan_spectrum_2015,
			lo_iudice_structural_2015,
			summers_submodularity_2016, 
			klickstein_energy_2017,
			aguirre_observ_2018,
			haber_state_2018}.
	Non-binary indices quantifying the degree of 	
	controllability and observability have been devised. These indices
	require at least the algebraic structure of the coupling functions between 
	the state nodes~\cite{aguirre_observ_2018} or even the full functional form
	and the parameters of the network~\cite{cornelius_realistic_2013}.
	
	A similar caveat applies to structural invertibility, which is only a condition 
	for the existence of the inverse input-output map. As in any inverse problem~	
	\cite{nakamura_potthast_inverse_2015}, this
	might not be sufficient to actually implement this inverse map 
	for reconstructing unknown inputs (including model errors) from outputs. 
	Some unknown input signals might be hard to detect by noisy sensors with limited sensitivity. 
	As discussed in Subsec.~\ref{ssec:practinv}, these  issues are 
	related to the mathematical fact that the inverse of the compact input-output map is not continuous. 
	Devising an index or condition number for the degree of invertibility (or 
	continuity of the 
	inverse) is therefore an important question for future research. Such an invertibility index might 
	be used to rank sensor nodes, which could readily be utilised in a straightforward modification of
	our sensor 
	node placement algorithm~(Sec.~\ref{sec:sensor}). The desired invertibility index might also be 
	useful for improving or designing unknown input reconstruction algorithms
	~\cite{kuhl_real-time_2011,
		schelker_comprehensive_2012, 
		Fonod_boblin_unknown_2014,
		engelhardt_learning_2016, 
		engelhardt_bayesian_2017,
		chakrabarty_state_2017, 
		tsiantis_optimality_2018},
	which adapt their regularisation automatically to the degree of discontinuity of the 
	inverse input-output map. 
	
	A further potential objection against structural invertibility is that the influence
	graph has to be completely known. Indeed, if the aim is to assess invertibility of the
	true but unknown systems structure, we might obtain erroneous results if we use an incorrect
	influence graph with missing or spurious edges. However, the aim of systems inversion
	is to detect unknown inputs including systematic model errors. Missing or spurious 
	interactions in the incorrect model graph are therefore causes of unknown inputs.
	To reconstruct the model errors, it is important that our potentially incorrect or 
	incomplete model is invertible. 	
	
	Currently, our invertibility results are limited to deterministic systems described
	by ODEs. However, the definition of invertibility doesn't exclude stochastic
	unknown unput functions. Nonlinear extensions of the Kalman-Filter~\cite{kuhl_real-time_2011, 
	schelker_comprehensive_2012, Fonod_boblin_unknown_2014} or fully probabilistic approaches
	~\cite{engelhardt_bayesian_2017} to reconstruct the moments or even the full probability 
	distribution of the unknown input will only work for invertible systems. Their actual
	utility to simultaneously overcome the discontinuity of the inverse input-output map and 
	to estimate probabilistic features of unknown inputs should systematically be explored. 
	
	Invertibility of systems with intrinsic process noise, which are often described by stochastic 	
	differential equations (SDEs), 	is a largely unexplored field. First, a modified probabilistic 
	definition of 
	invertibility is required for systems governed by SDEs. Second, the role of the different sources
	of noise for the invertibility needs to be investigated. Recent results for stochastic 
	synchronisation~\cite{russo_noise_ind_sync2018, burbano_pinning_noise_2019} indicate, that noise can 
	have both, detrimental and beneficial effects. 
	It would be exciting to investigate, whether similar effects are possible for unknown input
	reconstruction.

	To conclude, invertibility (or the lack of it) has important implications 
	for modelling frameworks and strategies to deal with incomplete and 
	uncertain systems. Our analysis and algorithm for optimal experimental 
	design are only a first step towards more sophisticated methods 
	specifically tailored to handle systematic model errors and open
	systems. We belive that these approaches will increase our ability to better 
	understand and manipulate complex systems, even if our knowledge will not be complete.


\begin{acknowledgments}
	This paper is part of the SEEDS project, which is funded by the Deutsche Forschungsgemeinschaft (DFG project number 354645666).
\end{acknowledgments}

\clearpage

\appendix*

\section{Derivation of the Algebraic Criterion}\label{app:Proof}
	For the sake of completeness let us explicitly write down the general mathematical setting 
	for the linear case in our notation and deduce the algebraic criterion. In the 
	general linear case, the 
	initial state is given by a vector 
	$\vec{x}_0\in\mathbb{R}^N$, 
	the dynamic of the system is given by 
	$A\in\mathbb{R}^{N\times N}$, 
	and 
	$C\in\mathbb{R}^{P \times N}$ 
	maps the system state to the output. There might be a known input 
	$\vec{u}:[0,T]\to \mathbb{R}^{M'}$ 
	distributed by 
	$B\in \mathbb{R}^{N\times M'}$ 
	over the state variables. Finally
	$D \vec{w}$ 
	models the structural model errors, so that 
	we get the linear dynamic system,
	\begin{subequations}
		\begin{eqnarray}
		\dot{\vec{x}}(t)  &=& A\vec{x}(t) + B\vec{u}(t) + D\vec{w}(t) \\
		\vec{x}(0) &=& \vec{x}_0 \\
		\vec{y}(t) &=& C\vec{x}(t) \, .
		\end{eqnarray}
		 \label{eq:DynamicSystem}
	\end{subequations}	
	Let $\Phi: \vec{w} \mapsto \vec{y}$ 
	denote the solution operator, that maps the input $\vec{w}$ to the output $\vec{y}$ 
	according to the dynamic system \eqref{eq:DynamicSystem}.
	For the dynamic system \eqref{eq:DynamicSystem} we introduce the homogeneous system 
	\begin{subequations}
		\begin{eqnarray}
		\dot{\vec{x}}(t)  &=& A\vec{x}(t)+ D\vec{w}(t) \\
		\vec{x}(0) &=& 0 \\
		\vec{y}(t) &=& C \vec{x}(t)
		\end{eqnarray}
	\end{subequations}
	with the solution operator $\Phi^\text{hom}: \vec{w} \mapsto \vec{y}$.	
	Recall that a system is called invertible if for given data 
	\begin{equation}
		\vec{y}^\text{obs}:[0,T]\to \mathbb{R}^P
	\end{equation}
	any solution of
	\begin{equation}
		\Phi (\vec{w})(t) = \vec{y}^\text{obs}(t) \quad \forall \, t\in [0,T]
		\label{eq:PhiEquation}
	\end{equation}	
	is unique. We will make use of the 
	Volterra-operator 
	\begin{equation}
		V (\vec{w})(t) := \int_0^t \vec{w}(s) \, \text{d}s \, .
	\end{equation}
	The Volterra-operator has the property
	\begin{equation}
		V^n (\vec{w})(t) = \int\limits_0^t \frac{(t-s)^{n-1}}{(n-1)!} \vec{w}(s) \, \text{d}s
	\end{equation}		
	and hence
	\begin{equation}
		\begin{aligned}
		&\sum_{n=0}^\infty C A^nD V^{n+1}(\vec{w})(t) \\ &\quad = 
		\int\limits_0^t C \exp((t-s)A) D \vec{w}(s) \, \text{d}s
		\, ,
		\end{aligned}
	\end{equation}
	where the integration is understood component-wise.
	\begin{lemma} \label{lemma:PhiHom}
		Let
		$\Phi$ 
		and 
		$\Phi^\text{hom}$ 
		be the solution operators of a dynamic system as defined above. Then
		\begin{enumerate}
			\item $\Phi^\text{hom}$ is linear, continuous, and compact,
			\item $\Phi(w+v)= \Phi (w)$ if and only if $\Phi^\text{hom}(v)=0$.
		\end{enumerate}
	\end{lemma}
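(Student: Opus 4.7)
\textbf{Proof proposal for Lemma~\ref{lemma:PhiHom}.}

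The plan is to obtain explicit integral representations for $\Phi$ and $\Phi^\text{hom}$ via the variation of constants formula and then read off all the asserted properties. Solving the linear ODE in Eq.~\eqref{eq:DynamicSystem} gives
\begin{equation*}
\vec{x}(t) = \exp(tA)\vec{x}_0 + \int_0^t \exp\bigl((t-s)A\bigr)\bigl(B\vec{u}(s) + D\vec{w}(s)\bigr)\,\text{d}s\,,
\end{equation*}
so that $\Phi(\vec{w})(t) = \vec{y}_0(t) + \int_0^t C\exp((t-s)A)D\vec{w}(s)\,\text{d}s$, where
$\vec{y}_0(t) := C\exp(tA)\vec{x}_0 + \int_0^t C\exp((t-s)A)B\vec{u}(s)\,\text{d}s$
is independent of $\vec{w}$. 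Setting $\vec{x}_0=\vec{0}$ and $\vec{u}\equiv\vec{0}$ yields the identity
$\Phi^\text{hom}(\vec{w})(t) = \int_0^t K(t,s)\vec{w}(s)\,\text{d}s$ with continuous matrix kernel $K(t,s):=C\exp((t-s)A)D$, so that $\Phi = \vec{y}_0 + \Phi^\text{hom}$.

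For part (1), linearity of $\Phi^\text{hom}$ is immediate from linearity of the integral in $\vec{w}$. Fix the function space $L^2([0,T],\mathbb{R}^M)$ with codomain $L^2([0,T],\mathbb{R}^P)$. Since $K$ is continuous on the compact set $[0,T]^2$, its entries are bounded and hence square-integrable, so the kernel has finite Hilbert--Schmidt norm
\begin{equation*}
\|K\|_{HS}^2 = \int_0^T\!\!\int_0^T \|K(t,s)\|_F^2\,\text{d}s\,\text{d}t < \infty\,,
\end{equation*}
where we interpret $K(t,s)=0$ for $s>t$. This implies both continuity (with operator norm bounded by $\|K\|_{HS}$) and compactness, since Hilbert--Schmidt operators are well known to be compact. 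Alternatively, if one prefers to work in $C([0,T])$ with the sup norm, the image of a bounded set of inputs consists of functions that are uniformly bounded and, by differentiation under the integral, have uniformly bounded derivatives, hence equicontinuous; compactness then follows from the Arzelà--Ascoli theorem.

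For part (2), the decomposition $\Phi(\vec{w}) = \vec{y}_0 + \Phi^\text{hom}(\vec{w})$ together with linearity of $\Phi^\text{hom}$ gives
\begin{equation*}
\Phi(\vec{w}+\vec{v}) - \Phi(\vec{w}) = \Phi^\text{hom}(\vec{w}+\vec{v}) - \Phi^\text{hom}(\vec{w}) = \Phi^\text{hom}(\vec{v})\,,
\end{equation*}
from which the equivalence follows immediately. The one real technical point is the compactness statement in (1); everything else is a direct consequence of the variation of constants formula and linearity. The only modelling choice that needs to be fixed in advance is the underlying function space for admissible inputs, and I would adopt $L^2$ so that the Hilbert--Schmidt argument applies verbatim and dovetails with the later discussion of singular values in Subsec.~\ref{ssec:practinv}.
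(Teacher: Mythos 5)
Your proposal is correct and follows essentially the same route as the paper's proof: both reduce $\Phi^{\text{hom}}$ to the Volterra-type integral operator with kernel $C\exp((t-s)A)D$ (the paper arrives there via the series $\sum_n (CA^nD)V^{n+1}$ of Volterra operators rather than by quoting variation of constants, but explicitly notes the two representations coincide), both deduce continuity and compactness from the Hilbert--Schmidt property on $L^2([0,T])$, and both obtain part (2) from the decomposition of $\Phi$ into an input-independent term plus $\Phi^{\text{hom}}$. Your version is slightly more explicit about the Hilbert--Schmidt norm and adds an optional Arzel\`a--Ascoli argument, but no new idea is needed or introduced.
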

	\begin{proof}
	\begin{enumerate}
		\item
		The homogeneous solution operator takes the form
		\begin{equation}
			\Phi^\text{hom} = \sum_{n=0}^\infty (CA^n D) V^{n+1} \quad .
		\end{equation}
		To see this, let
		\begin{equation}
			\vec{x}(t) =  \sum_{n=0}^\infty A^nD V^{n+1}(\vec{w})(t) 
		\end{equation}
		then
		\begin{align}
			\frac{\text{d}}{\text{d}t} \vec{x} (t) &= 
			D \vec{w}(t) + A \sum_{n=1}^\infty A^{n-1}D V^{n}(\vec{w})(t) \\
			& = D\vec{w}(t) + A\vec{x}(t)
		\end{align}
		which shows that $\vec{x}$ solves the dynamic equations as well as 
		$\vec{x}(0)=0$. Multiplication with 
		$C$ yields the solution of the homogeneous system
		\begin{equation}
			\vec{y}(t) = \Phi^\text{hom}(\vec{w})(t) \quad .
		\end{equation}
		Since $V^n$ is linear, so is $\Phi^\text{hom}$. 
		As we make the restriction $w_i\in L^2([0,T])$ we get that 
		$\Phi^\text{hom}$ is Hilbert-Schmidt thus continuous and compact.
		\item
		The inhomogeneous part of $\Phi$ is given by
		\begin{equation}
		\begin{aligned}
		\phi(t) := &\sum_{n=0}^\infty CA^n D V^{n+1} (\vec{u})(t) \\ &+ \exp(At)\vec{x}_0
		\end{aligned} \, ,
		\end{equation}
		such that the full solution can be written as
		\begin{equation}
			\Phi (\vec{w})(t) = \phi(t)+ \Phi^\text{hom} (\vec{w})(t) 
		\end{equation}				
		which shows that
		\begin{equation}
			\Phi(\vec{w}+\vec{v}) = \Phi(\vec{w}) + \Phi^\text{hom} (\vec{v})
		\end{equation}
		hence if and only if 
		$\Phi^\text{hom}(\vec{v})=0$ 
		then 
		$\vec{v}$ 
		leaves the solution of the inhomogeneous system invariant.
	\end{enumerate}
	\end{proof}
	As a result of the above lemma, we can set $\vec{x}_0$ as well as $\vec{u}$ to zero. 
	Furthermore this shows that $\Phi$ given in Sec.~\ref{sec:criteria_inv} 
	is indeed 
	the relevant solution operator
	that has to be one-to-one.
	We now follow the proof of Sain and Massey
	\cite{sain_invertibility_1969}.
	After Laplace-transformation the dynamic equation becomes
	\begin{equation}
		s \mathcal{L}[\vec{x}](s) = A \mathcal{L}[\vec{x}](s) + D \mathcal{L}[\vec{w}](s)
	\end{equation}
	with a complex variable $s\in\mathbb{C}$. Using the transfer function
	\begin{equation}
		T(s):=C(\mathbb{I}s - A)^{-1}D \, ,
	\end{equation}
	where $\mathbb{I}$ is the identity in $\mathbb{R}^N$, we get
	\begin{equation}
		T(s) \mathcal{L}[\vec{w}](s) = \mathcal{L}[y](s) 
	\end{equation}
	thus $T$ is the Laplace-transform of the solution operator $\Phi$, and 
	since $\mathcal{L}[\vec{w}]$ is the zero function if and only if $\vec{w}$ is the 
	zero-function (almost everywhere), we know that
	$\Phi$ is one-to-one if and only if $T$ has $\text{rank}\,T(s)= M$ 
	for $s\in \mathbb{C}$ (almost everywhere), i.e. if from $\mathcal{L}[\vec{y}]=0$ it 
	follows, that $\mathcal{L}[\vec{w}]=0$.
	We make the assumption, that $\vec{w}$ is smooth, which is equivalent to the assumption, 
	that it can be written as a Laurent-series (comprising only the principal part)
	\begin{equation}
		\mathcal{L}[\vec{w}](s) = \frac{1}{s} \sum\limits_{k=0}^\infty \frac{1}{s^k} \xi_k
	\end{equation}
	in Laplace-space, where $(\xi_k)_k$ is a sequence of $\mathbb{R}^M$ vectors. Using the 
	Neumann-series yields
	\begin{equation}
		C(\mathbb{I}s - A)^{-1} D = \frac{1}{s} \sum\limits_{l=0}^\infty C \frac{A^l}{s^l} D 
		\, .
	\end{equation}
	If $\mathcal{L}[\vec{y}]=0$, then
	\begin{equation}
		\frac{1}{s^2} \sum\limits_{k,l =0}^\infty \frac{1}{s^{l+k}} CA^l D \xi_k = 0	
	\end{equation}
	and since $\{1,s^{-1},s^{-2},\ldots\}$ are linearly independent in function space, 
	by equating coefficients for each $n\in\mathbb{N}_0$ we find
	\begin{equation}
		\sum\limits_{k=0}^n CA^k D \xi_{n-k} = 0 \, .
	\end{equation}		
	It is now convenient to define
	\begin{equation}
		R_n := \begin{bmatrix}
		CD & CAD & \ldots & CA^nD
		\end{bmatrix}
	\end{equation}
	as known from the Kalman controllability matrix, as well as
	\begin{equation}
		\Xi_n := \begin{bmatrix}
			\xi_n \\ \vdots \\ \xi_0
		\end{bmatrix}
	\end{equation}
	to finally get
	\begin{equation}
		R_n \Xi_n = 0 \, \forall n \in \mathbb{N}_0 \label{eq:MXi} \, .
	\end{equation}
	Thus, the dynamic system is invertible if and only if we find a sequence $(\xi_k)_k$ 
	such that \eqref{eq:MXi} holds.
	If we combine $R_0,R_1,\ldots , R_l$ to one matrix
	\begin{equation}
			Q_l := \begin{bmatrix}
				CD & CAD & \dots & CA^lD \\ 
				0  & CD  & \dots & CA^{l-1}D \\
				\vdots&    &   \ddots   & \vdots  \\
				0     &  \dots &     &  CD
			\end{bmatrix} \quad .
	\end{equation}	
	we find, that \eqref{eq:MXi} is equivalent to
	\begin{equation}
			\text{rank}\,Q_{N-1} - \text{rank}\, Q_{N-2} = M \, , 
			\label{eq:Sain}
	\end{equation}
	the criterion stated by Sain and Massey.
	From \eqref{eq:MXi} we directly see, that $\Xi_l \in \ker R_l$ and $\Xi_{l+1} 
	\in \ker R_{l+1}$.
	Also $\Xi_{l+1} = [\xi_{l+1} , \Xi_l]^T$, thus
	\begin{equation}
		\Xi_{l+1} \in \ker R_{l+1} \cap \left( \mathbb{R}^M \times \ker R_l \right) \, .
	\end{equation}
	If we now exclude the trivial solution $\xi_l = 0$ for all $l$, this motivates
	\begin{equation}
		K_0 := \ker R_0 \backslash \{0\}
	\end{equation}
	and
	\begin{equation}
		K_l :=  \ker R_{l+1} \cap \left( \mathbb{R}^M \times K_l \right) \, .
	\end{equation}
	As we iterate though $K_0,K_1,\ldots$, as long as $K_l \neq \emptyset$ there is a 
	non-trivial solution of 
	\begin{equation}
		R_k \Xi_k = 0 \text{  for } k \leq l \, .
	\end{equation}
	Hence, if and only if we find a $l\in\mathbb{N}_0$, such that $R_l = \emptyset$, then 
	the dynamic system is invertible. From \eqref{eq:Sain} one can see, that is suffices 
	to check only $l \leq N-1$.
	In addition to that we find the following theorem.
	\begin{theorem}\label{lemma:xiShift}
		The solution space of
		\begin{equation}
			\Phi(\vec{w}) = \vec{y}^\text{obs} 
		\end{equation}			
		is either zero- or infinite-dimensional.
	\end{theorem}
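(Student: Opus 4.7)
The plan is to reduce the claim to a dichotomy for $\ker \Phi^{\mathrm{hom}}$, and then to generate an infinite linearly independent family inside this kernel from any single nonzero element using powers of the Volterra operator $V$.

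First, by part (ii) of Lemma \ref{lemma:PhiHom}, any two solutions of $\Phi(\vec{w}) = \vec{y}^{\mathrm{obs}}$ differ by an element of $\ker \Phi^{\mathrm{hom}}$. Hence the solution set, whenever it is nonempty, is an affine translate of $\ker \Phi^{\mathrm{hom}}$, and its affine dimension equals $\dim \ker \Phi^{\mathrm{hom}}$. It therefore suffices to show that $\ker \Phi^{\mathrm{hom}}$ is either $\{0\}$ or infinite-dimensional.

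Next, from the representation $\Phi^{\mathrm{hom}} = \sum_{n=0}^\infty (CA^n D)\, V^{n+1}$ established in part (i) of Lemma \ref{lemma:PhiHom}, the Volterra operator commutes with $\Phi^{\mathrm{hom}}$: factoring one copy of $V$ out of every term of the series yields $\Phi^{\mathrm{hom}} \circ V = V \circ \Phi^{\mathrm{hom}}$. Consequently, if $\vec{v} \in \ker \Phi^{\mathrm{hom}}$ then $V^n \vec{v} \in \ker \Phi^{\mathrm{hom}}$ for every $n \geq 0$. Assuming $\ker \Phi^{\mathrm{hom}}$ contains some nonzero $\vec{v}$, I would propose $\{V^n \vec{v}\}_{n=0}^\infty$ as the desired infinite linearly independent family.

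The remaining and main step is linear independence. My preferred route is via the Laplace transform: since $V^n$ corresponds to multiplication by $s^{-n}$ in Laplace space, any relation $\sum_{n=0}^N c_n V^n \vec{v} = 0$ transforms into $\bigl(\sum_{n=0}^N c_n s^{-n}\bigr)\mathcal{L}[\vec{v}](s) = \vec{0}$. Because $\vec{v} \in L^2([0,T])$ is nonzero, $\mathcal{L}[\vec{v}]$ is a nonzero entire vector-valued function, so some component is a nonzero analytic function of $s$; the scalar factor $\sum c_n s^{-n}$ must then vanish on a set with an accumulation point, and hence vanish identically, forcing all $c_n = 0$. An equivalent formulation uses that $V$ is an injective quasinilpotent operator on $L^2([0,T])$: factoring any annihilating polynomial as $p(z) = z^k q(z)$ with $q(0) \neq 0$, the operator $q(V)$ is boundedly invertible (its spectrum is $\{q(0)\}$) while $V^k$ is injective, and $p(V)\vec{v} = 0$ then forces $\vec{v} = 0$, contradicting the choice of $\vec{v}$.

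The main obstacle is this final linear independence argument. It is standard, but requires care about the ambient function space and the vector-valued nature of $\vec{v}$; both routes above (transform or spectral) rely only on tools already in play in Lemma \ref{lemma:PhiHom} and in classical inverse-problem theory, and no additional machinery is needed.
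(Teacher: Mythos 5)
Your proposal is correct, and it is in essence the time-domain counterpart of the paper's argument rather than a verbatim match. The paper passes to Laplace space, encodes a smooth input by its coefficient sequence $(\xi_k)_k$, characterises kernel elements by $R_n\Xi_n=0$ for all $n$, and generates infinitely many independent solutions by applying the right shift $\mathfrak{S}_\rightarrow$ to the coefficient sequence; since multiplication by $s^{-1}$ in Laplace space is exactly the Volterra operator $V$ in the time domain, your family $\{V^n\vec{v}\}_{n\ge 0}$ is the same family of kernel elements, produced directly from the intertwining $\Phi^{\text{hom}}\circ V=V\circ\Phi^{\text{hom}}$ without ever leaving $L^2([0,T])$. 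What each route buys: the paper's version makes linear independence trivial (compare positions of the first nonzero coefficient, after the left-shift reduction to $\xi_0\neq 0$), but it rests on the standing smoothness/Laurent-series assumption on $\vec{w}$; your version needs no smoothness and works for general $L^2$ inputs, at the price of a genuine independence argument. On that last step, your spectral route is the right one and is airtight ($V$ is injective and quasinilpotent on $L^2([0,T])$, so $c_kI+\sum_{j>k}c_jV^{j-k}$ is invertible and $V^k$ is injective). The Laplace-transform variant, by contrast, has a small gap on a finite interval: the relation $\sum_n c_nV^n\vec{v}=0$ holds only on $[0,T]$, and the zero-extension of $\vec{v}$ to $[0,\infty)$ does not satisfy it beyond $T$, so one cannot immediately conclude $\bigl(\sum_n c_ns^{-n}\bigr)\mathcal{L}[\vec{v}](s)\equiv 0$; stick with the spectral formulation, or invoke the Titchmarsh convolution theorem if you want the transform-side picture.
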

	\begin{proof}
		From the considerations above it is clear, that we have to show that the 
		solution space of \eqref{eq:MXi} is either zero- or infinite-dimensional, i.e. 
		if there is a non-trivial sequence $(\xi_k)_k$, then there is an infinite-dimensional 
		vector space of such sequences. 
		
		First, assume $(\xi_k)_k$ is a sequence, that solves
		\eqref{eq:MXi} and $\xi_0 = 0$. We define another sequence $(\chi_k)_k$ by 
		$\chi_k := \xi_{k+1}$, i.e. $(\chi_k)_k$ is the left shift of $(\xi_k)_k$. 
		Let $X_l$ the 
		vector $[\chi_l,\ldots , \chi_0]^T$ analogous to $\Xi_l$.
		Then 
		\begin{equation}
			0= R_l \Xi_l = R_{l-1} X_{l-1}
		\end{equation}
		for all $l$. Hence $(\chi_k)_k$ is also a solution.		
		This shows, that if $(\xi_k)_k$ solves \eqref{eq:MXi}, then 
		we can without loss of generality assume $\xi_0 \neq 0$.		
		
		Now let $(\xi_k)_k$ a solution and define $\chi_0 := 0$ and $\chi_k=\xi_{k-1}$, i.e. 
		$(\chi_k)_k$ is the right shift of $(\xi_k)_k$. Then
		\begin{equation}
			R_0 X_0 = 0
		\end{equation}
		is clear, and for $l\geq 1$
		\begin{equation}
			R_l X_l = R_{l-1} \Xi_{l-1} = 0
		\end{equation}
		hence the sequence $(\chi_k)_k$ solves \eqref{eq:MXi}. Let henceforth 
		$\mathfrak{S}_\rightarrow$ denote 
		the right shift.
		
		Since matrix multiplication is a linear operation it is clear, that if $(\xi_k)_k$ 
		and $(\chi_k)_k$ solve \eqref{eq:MXi}, so does $(\xi_k+\chi_k)_k$ as well as 
		$(a \xi_k)_k$ for an arbitrary real number $a$. Therefore the space of sequences that 
		solve \eqref{eq:MXi} is a real vector space, denoted $\mathcal{K}$. Let $(\xi_k)_k 
		\in \mathcal{K}$ with $\xi_0 \neq 0$. Then 
		$\mathfrak{S}_\rightarrow^q (\xi_k)_k \in \mathcal{K}$ and 
		\begin{equation}
			\{ (\xi_k)_k, \mathfrak{S}_\rightarrow (\xi_k)_k, 
			\mathfrak{S}_\rightarrow^2 (\xi_k)_k , \ldots \}
		\end{equation}
		is a set of infinitely many linearly independent vectors in $\mathcal{K}$, hence
		\begin{equation}
			\dim \mathcal{K} = \infty \, .
		\end{equation}		
	\end{proof}


%


\end{document}